\newtheorem{theorem}{Theorem}[section]
\newtheorem{corollary}[theorem]{Corollary}
\theoremstyle{remark}
\newtheorem*{remark}{Remark}
\def\Z{\mathbb Z}
\def\Q{\mathbb Q}
\def\R{\mathbb R}
\def\C{\mathbb C}
\begin{document}

\title{On rigidity of complex Hirzebruch genera on $SU$-manifolds}

\author{G. S. Chernykh}
\date{}
\address{Faculty of Mathematics and Mechanics, Moscow
State University, Russia;\newline
Steklov Mathematical Institute of the Russian Academy of Sciences, Moscow, Russia}
\email{aaa057721@gmail.com}

\thanks{This work was supported by the Russian Science Foundation under grant no. 23-11-00143, https://rscf.ru/en/project/23-11-00143/ . The author is a winner of the Theoretical Physics and Mathematics Advancement Foundation ``BASIS'' program}

\maketitle

\begin{abstract}
We prove that if a complex genus $\varphi \colon \varOmega^U \to R$ is rigid on $SU$-manifolds with a torus action then $\varphi$ is the elliptic Krichever genus.
\end{abstract}

\section*{Introduction}

In \cite{kr90} Krichever defined the generalised elliptic genus $\varphi_{Kr} \colon \varOmega^U_* \to R$ (depending on four parameters) and proved that $\varphi_{Kr}$ is rigid on any manifold with a torus action and an invariant $SU$-structure (a stably complex structure with the first Chern class zero). A complex genus $\varphi \colon \varOmega^U_* \to R$ is rigid on a stably complex manifold $M$ with an action of a torus $T^k$ if its universal $T^k$-equivariant extension 
$$
  \varphi^{T^k} \colon \varOmega^{U:\, T^k}_* \to R[[x_1, x_2,\ldots, x_k]]
$$ 
is constant on the class $[M]\in\varOmega^{U:\, T^k}_*$ (see \cite[\S9.3]{bu-pa15}). Rigidity is an important property of a genus. For example, let $G$ be a connected compact Lie group with the maximal torus $T^k$, $M$ be a stably complex $G$-manifold, and $M\to E\to B$ be a fibre bundle with the fiber $M$ and the structure group $G$. If for any such bundle we have $\varphi([E])=\varphi([M])\varphi([B])$, then the genus $\varphi$ is a $T^k$-rigid on $M$; and conversely, if $\varphi$ is $T^k$-rigid on $M$, then we have $\varphi([E])=\varphi([M])\varphi([B])$ for any bundle with torsion-free $U^*(BG)$ (see \cite[Theorem~9.3.6]{bu-pa15}).

In \cite[Theorem 9.7.13]{bu-pa15} Buchstaber and Panov proved that if a complex genus $\varphi$ is rigid on the sphere $S^6$ (with the $T^2$-invariant almost complex structure from the homogeneous space $G_2/SU(3)=S^6$) and $\varphi([S^6])\ne 0$, then $\varphi$ is the Krichever genus. In particular, $\varphi$ is rigid on any $T^k$-equivariant $SU$-manifold. However, if $\varphi([S^6])=0$, then the rigidity on $S^6$ does not imply that $\varphi$ is the Krichever genus.

The following question was asked in~\cite[Problem~9.7.14]{bu-pa15}: let $\varphi$ be a genus which is rigid on any special unitary $T^k$-manifold, is it true that $\varphi$ is the Krichever genus? In this paper we give an affirmative answer on this question. In fact, we prove that if a complex genus $\varphi$ is rigid on the sphere $S^6$ and on the $10$-dimensional quasitoric $SU$-manifold $\widetilde L(2,3)$ constructed in \cite[Construction 4.9]{lu-pa16}, then $\varphi$ is the Krichever genus. In particular, it is also rigid on all special unitary $T^k$-manifolds.

The author is grateful to Taras Panov for suggesting the problem, fruitful discussions and constant attention to the work.

\section{Preliminaries}

For a smooth compact manifold $M$ (may be with boundary), \emph{a stably complex structure} on $M$ is an equivalence class of real vector bundle isomorphisms $T M \oplus \R^N \cong \xi$, where $T M$ is the tangent bundle of $M$ and $\xi$ is some complex vector bundle over $M$ (considered here as a real vector bundle). The equivalence relation between such isomorphisms is generated by complex isomorphisms $\xi_1\cong \xi_2$ and adding trivial summands $T M \oplus \R^{N+2} \cong \xi\oplus\C$. A \emph{stably complex manifold} is a manifold equipped with a stably complex structure.

A stably complex manifold $M$ is said to be an \emph{$SU$-manifold} if $c_1(TM):=c_1(\xi)=0$.

If $W$ is a stably complex manifold with boundary, then there is the induced stably complex structure on $\partial W$ defined via the isomorphism $T W|_{\partial W} \cong
T M \oplus \R$ (which depends on whether we choose an inward or outward pointing normal vector to $\partial W$ in $W$ as a basis for $\R$, and whether we place this normal vector at the beginning or at the end of the tangent frame of $\partial W$; we need to fix the choice).
If we have a stably complex manifold $M$, then there is the \emph{opposite} stably complex structure on $M$ defined as $T M \oplus \R^N \oplus \R^2 \cong \xi \oplus \C$, where we consider the non-standard isomorphism $\C \to \R^2$, $a+ib \mapsto (a,-b)$. Given a stably complex manifold $M$, we denote by $\overline{M}$ the same manifold with the opposite stably complex structure.

Two closed stably complex manifolds $M_1$ and $M_2$ are said to be \emph{complex bordant} if there is a stably complex manifold with boundary $W$ such that $\partial W = M_1 \sqcup \overline{M_2}$ as stably complex manifolds. This is an equivalence relation on the closed stably complex manifolds. We denote the corresponding equivalence classes, \emph{complex bordism classes}, by $[M]$. The set of complex bordism classes of $n$-dimensional manifolds is denoted by $\varOmega^U_n$. One can easily see that $\partial(M\times I)=M\sqcup \overline{M}$, so the set $\varOmega^U_n$ forms an abelian group with respect to the disjoint union (with the opposite $-[M]=[\overline{M}]$). The graded abelian group $\varOmega^U_* =\bigoplus \varOmega^U_n$ also forms a commutative ring with respect to the Cartesian product.

Let $M$ be a manifold with an action of a torus $T^k$. A stably complex structure on a manifold $M$ is called \emph{$T^k$-invariant} if for any $t\in T^k$ the isomorphism $$\xi \cong T M \oplus \R^N \xrightarrow[\cong]{d t \oplus \mathrm{id}} T M \oplus \R^N \cong \xi$$ is an isomorphism of complex bundles.
One can similarly define the bordism groups $\varOmega^{U: \, T^k}_n$ and the bordism ring $\varOmega^{U:\, T^k}_*$ of such stably complex $T^k$-manifolds.

Let $x \in M$ be an isolated fixed point of the $T^k$-action on $M$. Then we have a complex representation $r_x \colon T^k \to GL(m, \C)$ in the fibre of $\xi$ over $x$, where $m=\dim\xi$. This fibre $\xi_x \cong \C^m$ decomposes as $V\oplus W$, where $r_x$ has no trivial summands on $V$ and is trivial on $W$.
We have $\dim M = \dim_\R V = 2\dim_\C V =2n$ because $x \in M$ is an isolated fixed point (in particular, the dimension of $M$ is necessary even). The nontrivial part $V$ of $r_x$ decomposes into a sum $r_1 \oplus \cdots \oplus r_n$ of one-dimensional complex
$T^k$-representations. In the corresponding coordinates $(z_1,\ldots,z_n) \in V$, an element $t = (e^{2\pi i \varphi_1}, \ldots,e^{2\pi i \varphi_k}) \in T^k$ acts by
$$t\cdot (z_1,\ldots,z_n)=(e^{2\pi i \langle w_1, \varphi \rangle} z_1,\ldots,e^{2\pi i \langle w_n, \varphi \rangle} z_n),$$
where $\varphi = (\varphi_1,\ldots,\varphi_k) \in \R^k$ and $w_j \in \Z^k$, $1\le j \le n$, are the \emph{weights} of the
representation $r_x$ at the fixed point $x$.

Also, the isomorphism $\xi_x\cong T_xM \oplus \R^N$ induces an orientation of the tangent space
$T_xM$, as both $\R^N$ and $\xi_x$ are canonically oriented.
For any fixed point $x \in M$, the \emph{sign} $\sigma(x)$ is $+1$ if the isomorphism
$$T_x M \xrightarrow{\mathrm{id}\oplus 0} T_xM \oplus \R^k \cong \xi_x = V \oplus W \xrightarrow{pr_1} V$$
respects the canonical orientations, and $-1$ if it does not.

Let $P$ be an oriented combinatorial simple $n$-polytope with
$m$ facets, and let $\Lambda$ be an integer $n\times m$-matrix satisfying the following condition: if a vertex $v \in P$ is an intersection of $n$ facets $v = F_{j_1} \cap \cdots \cap F_{j_n}$, then for the square submatrix $\Lambda_v = \Lambda_{j_1,\ldots,j_n}$ formed by the columns $j_1,\ldots,j_n$ of matrix $\Lambda$ we have $\mathrm{det}\,\Lambda_v = \pm1$.

For any such pair $(P,\Lambda)$ one can define an \emph{(omnioriented) quasitoric manifold} $M(P, \Lambda)$, it is a stably complex $2n$-dimensional $T^n$-manifold (see \cite[7.3]{bu-pa15}). The set of fixed points $M^T$ consists of the isolated fixed points which are in bijection with the vertices of $P$. If the fixed point $p$ corresponds to the vertex $v$, then the weights
$w_1(p),\ldots, w_n(p)$ are given by the rows of the square matrix $(\Lambda_v)^{-1}$ (see \cite[Theorem 7.3.18]{bu-pa15}). Also, the sign of the fixed point $p$ is equal to $\mathrm{sign}\bigl(\mathrm{det}(\Lambda_v)\mathrm{det}(a_{j_1} ,\ldots, a_{j_n})\bigr)$, where we denote by $a_j$ the inward-pointing normals to the facets $F_j$ of $P\subset \R^n$ (see \cite[Theorem 7.3.19]{bu-pa15}).

By \emph{a complex genus} we will call any ring homomorphism $\varphi \colon \varOmega^U_*\to R$ from the complex bordism ring to some commutative ring $R$. It's known that such homomorphisms are in bijection with the formal group laws over $R$ (see, for example, \cite[Theorem E.2.6]{bu-pa15}; for a definition of formal group laws and the details on the connection between formal group laws and complex genera see \cite[Appendix E]{bu-pa15}). Let $R$ be a $\Q$-algebra. Then for any formal group law $F(x,y)$ over $R$ there is a unique power series $f(x)=x+\ldots$ over $R$ such that $$f(x+y)=F(f(x),f(y))$$ (see \cite[Theorem E.1.1]{bu-pa15}). Such $f(x)$ is called the \emph{exponent} of $F$. So complex genera with values in a $\Q$-algebra $R$ are in bijection with the power series of the form $x+\ldots \in R[[x]]$. In what follows we will assume that $R$ is a $\Q$-algebra.

The direct relation between a genus $\varphi$ and its exponent $f$ is the following: consider the symmetric function $\prod_{i=1}^n \frac{x_i}{f(x_i)}$, write it as the power series on elementary symmetric functions $P(\sigma_1, \ldots, \sigma_n)$, replacing $\sigma_i$ by the Chern classes $c_i$ we get the characteristic class $P_f=P(c_1, \ldots, c_n)\in H^*(BU; R)$, and then $\varphi([M])=\langle P_f(T M), [M]_\Z \rangle$, where $[M]_\Z$ is the integral fundamental class of $M$ corresponding to the orientation obtained from the stably complex structure (see \cite[Theorem E.3.3]{bu-pa15}).

Conversely, the series $x+\sum_{k\ge 1}\frac{\varphi([\C P^k])}{k+1}x^{k+1}\in R[[x]]$ is the inverse for the series $f(x)$ with respect to the composition of the series (see \cite[Theorem E.2.5]{bu-pa15}).

For any complex genus $\varphi \colon \varOmega^U_* \to R$ one can define its universal \emph{$T^k$-equivariant extension} $\varphi^T \colon \varOmega^{U: \,T^k} \to R[[x_1,\ldots,x_k]]$ (see \cite[9.2-9.3]{bu-pa15}). The constant term of $\varphi^T([M])$ is equal to $\varphi([M])$. A genus $\varphi \colon \varOmega^U_* \to R$ is said to be \emph{$T^k$-rigid} on a stably complex $T^k$-manifold
$M$ whenever $\varphi^T ([M]) \equiv \varphi([M])$, i. e. the power series $\varphi^T([M])\in R[[x_1,\ldots, x_k]]$ is equal to its constant term.

In \cite{bu-pa-ra10} the following localisation formula has been proved.

\begin{theorem}[{\cite[Proposition 6.15]{bu-pa-ra10} or \cite[Theorem 9.4.3]{bu-pa15}}]\label{loc}
  Let $\varphi \colon \varOmega^U \to R$ be a complex genus with the exponent $f(x)=x+\ldots \in R[[x]]$, and let $M$ be a stably complex $2n$-dimensional $T^k$-manifold with isolated fixed points $M^T$. Then the equivariant genus $\varphi^T ([M]) = \varphi([M]) + \ldots$ is given by
  $$\varphi^T([M])=\sum\limits_{p\in M^T} \sigma(p) \prod \limits_{i=1}^n \frac{1}{f(\langle w_i(p), x \rangle)},$$
where $\langle w, x \rangle = w_1x_1 + \ldots + w_kx_k$ for $w = (w_1,\ldots,w_k)$.  
\end{theorem}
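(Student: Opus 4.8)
The plan is to realise the equivariant genus $\varphi^{T^k}([M])$ as a fibrewise integral over the Borel fibration and then apply the Atiyah--Bott--Berline--Vergne localisation theorem. Recall that $\varphi$ is encoded by the stable multiplicative characteristic class $P_f$ with $P_f = \prod_i \frac{x_i}{f(x_i)}$ on Chern roots $x_i$, so that $\varphi([M]) = \langle P_f(TM), [M]_\Z\rangle = \pi_!(P_f(TM))$ for the collapse map $\pi\colon M \to \mathrm{pt}$. Its equivariant extension is obtained by replacing $\pi$ with the projection $\rho\colon M_{T^k} := ET^k \times_{T^k} M \to BT^k$ of the Borel fibration, whose fibre is $M$: one takes the stably complex bundle along the fibres $T_F M_{T^k}$ (induced by the invariant stably complex structure on $M$), forms $P_f(T_F M_{T^k}) \in H^*_{T^k}(M; R)$, and pushes it forward, $\varphi^{T^k}([M]) = \rho_!\bigl(P_f(T_F M_{T^k})\bigr) \in H^*(BT^k; R) = R[[x_1,\ldots,x_k]]$. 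Restricting to a single point of $BT^k$ recovers $\pi_!(P_f(TM)) = \varphi([M])$, which accounts for the asserted constant term.

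Second, I would invoke localisation. Since $M^{T^k}$ consists of isolated fixed points, each $p$ determines a section $\iota_p\colon BT^k \hookrightarrow M_{T^k}$, namely $ET^k \times_{T^k}\{p\}$, with equivariant normal bundle $ET^k \times_{T^k} T_pM$. Working over the field of fractions of $R[[x_1,\ldots,x_k]]$ (equivalently, after inverting the equivariant Euler classes), the Atiyah--Bott--Berline--Vergne formula gives
$$\rho_!\bigl(P_f(T_F M_{T^k})\bigr) = \sum_{p \in M^{T^k}} \frac{\iota_p^*\, P_f(T_F M_{T^k})}{e_{T^k}(T_pM)}.$$
The individual summands are merely rational, but their sum lies in $R[[x_1,\ldots,x_k]]$ because the left-hand side manifestly does; this is the usual regularity feature of localisation.

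Third, I would compute the two factors at each $p$. The equivariant Chern root of the weight-$w$ line bundle $ET^k\times_{T^k}\C_w$ over $BT^k$ is the linear form $\langle w, x \rangle = w_1 x_1 + \cdots + w_k x_k$. Writing $\xi_p = V \oplus W$ with $V = \bigoplus_{i=1}^n L_{w_i(p)}$ nontrivial and $W$ trivial, stability of $P_f$ gives $\iota_p^* P_f(T_F M_{T^k}) = P_f(V) = \prod_{i=1}^n \frac{\langle w_i(p), x \rangle}{f(\langle w_i(p), x \rangle)}$. For the denominator, the complex structure on $V$ has equivariant Euler class $\prod_{i=1}^n \langle w_i(p), x \rangle$; since $T_pM \cong V$ as real $T^k$-representations (their nontrivial parts agree and $\dim T_pM = \dim V$), this is also $e_{T^k}(T_pM)$ up to orientation. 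The pushforward $\rho_!$ uses the fibre orientation coming from $\xi \cong TM \oplus \R^N$, which differs from the complex orientation of $V$ precisely by the sign $\sigma(p)$, so $e_{T^k}(T_pM) = \sigma(p)\prod_{i=1}^n \langle w_i(p), x \rangle$. Dividing, the factors $\prod_i \langle w_i(p), x\rangle$ cancel and $1/\sigma(p) = \sigma(p)$, which leaves the summand $\sigma(p)\prod_{i=1}^n \frac{1}{f(\langle w_i(p), x \rangle)}$; summing over $p \in M^{T^k}$ yields the claim.

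The main obstacle is the orientation bookkeeping that produces the sign $\sigma(p)$: because $M$ is only stably, not honestly, complex, the fibre orientation used by $\rho_!$ is the one induced by $\xi \cong TM \oplus \R^N$, and one must carefully compare it with the complex orientation of the nontrivial weight summand $V$ at each fixed point — this comparison is exactly the definition of $\sigma(p)$, and it is the sole place where the distinction between honestly complex and stably complex ($SU$) structures enters. A secondary technical point is justifying localisation in the completed Borel setting: passing to finite-dimensional approximations of $ET^k$, inverting the equivariant Euler classes $e_{T^k}(T_pM)$, and confirming that the resulting sum is a genuine element of $R[[x_1,\ldots,x_k]]$ rather than only a rational expression.
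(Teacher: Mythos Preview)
The paper does not supply its own proof of this statement: Theorem~\ref{loc} is quoted from \cite[Proposition~6.15]{bu-pa-ra10} and \cite[Theorem~9.4.3]{bu-pa15} and used as a black box, so there is no in-paper argument to compare against.

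Your sketch is essentially the standard and correct derivation. The identification of $\varphi^{T^k}$ with the Borel-equivariant pushforward $\rho_!P_f(T_F M_{T^k})$, followed by Atiyah--Bott--Berline--Vergne localisation and the Chern-root computation at each fixed point, is exactly how one arrives at the displayed formula; your handling of the sign $\sigma(p)$ via the discrepancy between the stably complex fibre orientation and the complex orientation of the weight summand $V$ is the right bookkeeping. One small wording issue: you speak of the ``field of fractions of $R[[x_1,\ldots,x_k]]$'', but $R$ is only assumed to be a $\Q$-algebra and need not be a domain; your parenthetical ``after inverting the equivariant Euler classes'' is the correct formulation, and you should lead with that. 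For comparison, the cited sources obtain the formula more intrinsically in complex cobordism (via tom~Dieck's bundling transformation $\varOmega^{U:\,T^k}_* \to MU^*_{T^k}(\mathrm{pt})$ and localisation in $MU$-theory) and then specialise along $\varphi$, which avoids the finite-approximation step you flag at the end; your cohomological route is a legitimate and more elementary variant of the same idea.
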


\begin{remark}
    The summands on the right hand side of the formula in Theorem \ref{loc} separately belong to the quotient ring of $R[[x_1,\ldots,x_k]]$, but their sum in fact belongs to the ring $R[[x_1,\ldots,x_k]]$ itself.
\end{remark}

So if a genus $\varphi$ is rigid on $M$, then we obtain the following \emph{rigidity equation} on its exponent $f$:

$$c=\sum\limits_{p\in M^T} \sigma(p) \prod \limits_{i=1}^n \frac{1}{f(\langle w_i(p), x \rangle)},$$
where $c=\varphi([M])$.

In \cite{kr90} Krichever defined the generalised elliptic genus, which is now called the \emph{Krichever genus}. We will denote it by $\varphi_{Kr}$. The coefficients of the exponent $f_{Kr}$, viewed as a rational power series, are polynomials in four parameters, so $f_{Kr}\in \Q[\alpha, b_1, b_2, b_3][[x]]$ (and hence $\varphi_{Kr} \colon \varOmega^U_* \to \Q[\alpha, b_1, b_2, b_3]$). More precisely, $f_{Kr}(x)=\frac{e^{\alpha x}}{\Phi(x)}$, where $\Phi(x)\in \Q[b_1, b_2, b_3][[x]]$ is the rational expansion of the elliptic \emph{Baker--Akhiezer function}.

There is a unique power series $f(x) = x+\ldots\in \Q[\delta, \varepsilon][[x]]$ satisfying the differential equation $(f(x)')^2 = 1 - 2\delta f^2(x) + \varepsilon f^4(x)$. Such a power series is known as the Jacobi \emph{elliptic sine} and denoted by $\mathrm{sn}(x)$. It is known that for $b_2=0$ the function $\Phi(x)$ is equal to $\frac{1}{\mathrm{sn}(x)}$, and hence for the corresponding reduction of the Krichever genus we have $f_{Kr}(x)=e^{\alpha x} \mathrm{sn}(x)$. For the details on the Krichever genus and the elliptic sine see \cite[\S E.5]{bu-pa15}.

Krichever proved the following rigidity theorem.

\begin{theorem}[{\cite{kr90}}]\label{krig}
    The Krichever genus $\varphi_{Kr}$ is rigid on $SU$-manifolds.
\end{theorem}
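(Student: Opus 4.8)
The plan is to derive the theorem from the localisation formula (Theorem~\ref{loc}) together with the quasi-periodicity of the Baker--Akhiezer function.

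First I would reduce to a circle action: for $\lambda\in\Z^k$ in general position the subcircle $S^1_\lambda\subset T^k$ has the same fixed set as $T^k$, namely the isolated points of $M^{T^k}$, and the $i$-th tangent weight at $p$ becomes the integer $m_i(p)=\langle w_i(p),\lambda\rangle$; since $\varphi_{Kr}^{T^k}([M])(\lambda_1 x,\ldots,\lambda_k x)=\varphi_{Kr}^{S^1_\lambda}([M])(x)$ and a power series in $x_1,\ldots,x_k$ that becomes constant on all lines $\R\lambda$ with $\lambda$ in general position is itself constant, it is enough to treat $k=1$. Substituting $f_{Kr}(x)=e^{\alpha x}/\Phi(x)$ into Theorem~\ref{loc} gives
\[
 \varphi_{Kr}^{S^1}([M])(x)=\sum_{p\in M^{S^1}}\sigma(p)\,e^{-\alpha x\sum_i m_i(p)}\,\prod_{i=1}^n\Phi\bigl(m_i(p)x\bigr).
\]
Here I would use the $SU$-condition in the form $\sum_{i=1}^n w_i(p)=0$ at every fixed point $p$ (vanishing of the equivariant first Chern class of the stable tangent bundle): then $\sum_i m_i(p)=0$, the exponential factors equal $1$, and
\[
 h(x):=\varphi_{Kr}^{S^1}([M])(x)=\sum_{p\in M^{S^1}}\sigma(p)\,\prod_{i=1}^n\Phi\bigl(m_i(p)x\bigr)
\]
no longer involves $\alpha$ and has constant term $\varphi_{Kr}([M])$.

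Next I would show that $h$ is an elliptic function. The Baker--Akhiezer function $\Phi$ is meromorphic on $\C$ with a simple pole at each point of a lattice $\Lambda=\Z\omega_1+\Z\omega_2$, and from its Weierstrass $\sigma$-function expression it is quasi-periodic with constant multipliers, $\Phi(z+\omega_j)=\chi_j\Phi(z)$ for certain invertible $\chi_j$ depending on $b_1,b_2,b_3$. Iterating this, $\Phi\bigl(m_i(p)(x+\omega_j)\bigr)=\chi_j^{\,m_i(p)}\Phi\bigl(m_i(p)x\bigr)$, so each summand of $h$ is scaled by $\chi_j^{\sum_i m_i(p)}=1$; hence $h(x+\omega_j)=h(x)$ and $h$ descends to a meromorphic function on the elliptic curve $E=\C/\Lambda$.

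Finally I would show that $h$ is holomorphic, hence constant. On $E$ the poles of $h$ can only lie at the finitely many torsion points $x_0$ with $m_i(p)x_0\in\Lambda$ for some $p$ and $i$; at $x_0=0$ there is no pole, since $h$ is a genuine power series by the Remark after Theorem~\ref{loc}. For a nonzero torsion point $x_0$ of order $N$ I would run the transfer/localisation argument of Bott--Taubes and Krichever: restrict to $\Z/N\subset S^1$ and, over each component $X_\beta$ of the fixed submanifold $M^{\Z/N}$, split the stable tangent bundle as $TX_\beta$ plus the normal bundle $\nu_\beta$; for $p\in X_\beta^{S^1}$ the directions tangent to $X_\beta$ are those with $N\mid m_i(p)$, on which the residual circle acts with weight $m_i(p)/N$ and $\Phi(m_i(p)x)$ has a simple pole at $x_0$, while the normal directions contribute factors $\Phi(m_i(p)x_0)$ regular at $x_0$. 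Collecting the singular parts of the summands with $p\in X_\beta^{S^1}$ identifies the principal part of $h$ at $x_0$, up to an invertible factor, with a localisation sum over $X_\beta$ computing a characteristic number of $X_\beta$ twisted by the equivariant data of $\nu_\beta$, and this number vanishes because $c_1(TM)|_{X_\beta}=0$. Hence $h$ has no poles, so it is holomorphic on the compact curve $E$ and therefore constant, and $h(x)\equiv h(0)=\varphi_{Kr}([M])$; this is the $S^1$-rigidity of $\varphi_{Kr}$ on $M$, and by the first step also the $T^k$-rigidity.

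The main obstacle is the last step. The reduction to a circle and the verification that $h$ is elliptic are essentially formal once the quasi-periodicity of $\Phi$ is available; the cancellation of the principal parts at the nonzero torsion points is the substance of the rigidity theorem, and it is exactly where the $SU$-hypothesis is used in an essential way: not merely the vanishing of $c_1$ on $M$, but the compatible behaviour of the stable complex structure on the fixed submanifolds $M^{\Z/N}$. The same computation explains why rigidity fails for a non-elliptic genus.
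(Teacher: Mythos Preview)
The paper does not prove Theorem~\ref{krig}; it is quoted from Krichever's paper \cite{kr90} and used as input, so there is no in-paper argument to compare against. That said, your sketch follows the familiar Bott--Taubes/Krichever strategy, and the reduction to a circle and the use of Theorem~\ref{loc} are fine. There is, however, a concrete error that damages the next two steps.

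You assert that the $SU$-condition gives $\sum_i w_i(p)=0$ at every fixed point, calling this ``vanishing of the equivariant first Chern class''. That is not what $c_1(\xi)=0$ says. It only forces the equivariant class $c_1^{T}(\xi)\in H^2_T(M)$ to come from $H^2(BT)$, so the linear form $\sum_i w_i(p)$ is the \emph{same} at every fixed point, but generally nonzero. The paper itself illustrates this: in the proof of Theorem~\ref{main} it is observed (with a reference to \cite{kr74}) that for the $SU$-manifold $\widetilde L(2,3)$ the sum of weights at each vertex equals $x_1+\cdots+x_5$, not $0$. Consequently the exponential factors do not disappear; they contribute a common factor $e^{-\alpha c x}$ with $c=\sum_i m_i(p)\ne 0$, and the product $\prod_i\Phi(m_i(p)x)$ picks up $\chi_j^{\,c}\ne 1$ under $x\mapsto x+\omega_j$. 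So your $h$ is not doubly periodic; it is only quasi-periodic with multipliers $e^{c(\log\chi_j-\alpha\omega_j)}$. A holomorphic function on $\C$ with two independent constant multipliers need not be constant (think $e^{\lambda x}$), so even after the pole-cancellation step the Liouville argument you invoke does not close. One has to either reduce to the case $c=0$, or feed the specific relation between $\alpha$, the spectral parameter of $\Phi$, and the quasi-periods into the argument; this is exactly where the four-parameter structure of $\varphi_{Kr}$ matters, and it is missing from your outline.

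Your last paragraph, where the principal parts at nonzero torsion points are claimed to vanish ``because $c_1(TM)|_{X_\beta}=0$'', is also too thin. In Krichever's argument (and in the Bott--Taubes version) this step uses an addition-type identity for $\Phi$ and the behaviour of the determinant line of the normal bundle of $X_\beta$; the bare restriction $c_1|_{X_\beta}=0$ is not the mechanism. You are right that this is the heart of the matter, but the sketch does not yet capture it.
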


Also, it was proven in \cite{bu-pa-ra10} that the Krichever results imply the following

\begin{theorem}[{\cite[Theorem 6.13]{bu-pa-ra10} or \cite[9.7.11]{bu-pa15}}]\label{zero}
    If $M$ is an omnioriented quasitoric $SU$-manifold, then $\varphi_{Kr}([M])=0$.
\end{theorem}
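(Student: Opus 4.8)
The plan is to combine the rigidity of $\varphi_{Kr}$ with the localisation formula, rewrite the resulting identity through the Baker--Akhiezer form of $f_{Kr}$, and then read it as an identity of honest meromorphic functions on an elliptic curve, where an Atiyah--Hirzebruch-type growth argument forces the constant $\varphi_{Kr}([M])$ to vanish. Write $M=M(P,\Lambda)$ with $\dim M=2n$; the torus $T^n$ acts with isolated fixed points $p_v$ indexed by the vertices $v$ of $P$, with weights the rows of $(\Lambda_v)^{-1}$ and signs $\sigma(p_v)$ as recalled above. By Theorem~\ref{krig} the genus $\varphi_{Kr}$ is rigid on the $SU$-manifold $M$, so Theorem~\ref{loc} gives the constant identity
$$\varphi_{Kr}([M])=\sum_v\sigma(p_v)\prod_{i=1}^n\frac1{f_{Kr}(\langle w_i(p_v),\mathbf x\rangle)}=\sum_v\sigma(p_v)\,e^{-\alpha\langle\sum_i w_i(p_v),\mathbf x\rangle}\prod_{i=1}^n\Phi(\langle w_i(p_v),\mathbf x\rangle),$$
where we used $1/f_{Kr}(x)=e^{-\alpha x}\Phi(x)$.

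The hypothesis $c_1(M)=0$ enters next. In the standard presentation of $H^2(M)$ the class $c_1(M)$ corresponds to $(1,\dots,1)$ and the linear relations to the rows of $\Lambda$, so $c_1(M)=0$ means $\mu\Lambda=(1,\dots,1)$ for a unique $\mu\in\Z^n$, and this $\mu$ is \emph{nonzero} because $(1,\dots,1)\ne0$. Restricting to each submatrix $\Lambda_v$ gives $\sum_{i}w_i(p_v)=(1,\dots,1)(\Lambda_v)^{-1}=\mu$ for \emph{every} vertex $v$. Hence the factor $e^{-\alpha\langle\mu,\mathbf x\rangle}$ is the same in all summands and can be pulled out:
$$e^{\alpha\langle\mu,\mathbf x\rangle}\,\varphi_{Kr}([M])=\sum_v\sigma(p_v)\prod_{i=1}^n\Phi(\langle w_i(p_v),\mathbf x\rangle).$$

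Since $\varphi_{Kr}([M])\in\Q[\alpha,b_1,b_2,b_3]$, it suffices to prove its vanishing after specialising the parameters to a Zariski-dense set of complex values. Fix a generic specialisation with $b_2\ne0$, so that $\Phi$ becomes a genuine meromorphic Baker--Akhiezer function on a smooth elliptic curve $E=\C/L$, with simple poles exactly on $L$ and of the form $\Phi(y)=e^{ay}\psi(y)$ for a nonzero $a\in\C$ and a function $\psi$ that is bounded away from $L$ and periodic for one of the periods of $L$. Pick $u\in\Z^n$ generic enough that all $m_i(v):=\langle w_i(p_v),u\rangle$ are nonzero integers and, crucially, that $\langle\mu,u\rangle\ne0$ (possible exactly because $\mu\ne0$); note $\sum_i m_i(v)=\langle\mu,u\rangle$ is independent of $v$. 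Restricting the identity just obtained to $\mathbf x=t\,u$: the left side becomes the entire function $e^{\alpha\langle\mu,u\rangle t}\varphi_{Kr}([M])$, while the right side is a finite sum $g(t):=\sum_v\sigma(p_v)\prod_i\Phi(m_i(v)t)$ of meromorphic functions of $t\in\C$ which is holomorphic near $t=0$; as the formal power series identity shows the two sides agree near $t=0$, they agree on all of $\C$, so $g(t)=e^{\alpha\langle\mu,u\rangle t}\varphi_{Kr}([M])$ identically (in particular $g$ is entire).

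Finally, for $t$ outside the thin family of small discs on which some $m_i(v)t$ lies near $L$ we have $|\Phi(m_i(v)t)|\le(\sup|\psi|)\,|e^{am_i(v)t}|$, hence $|g(t)|\le C\,|e^{a\langle\mu,u\rangle t}|$ with $C$ independent of $t$; combined with $g(t)=e^{\alpha\langle\mu,u\rangle t}\varphi_{Kr}([M])$ this gives $|\varphi_{Kr}([M])|\le C\,|e^{(a-\alpha)\langle\mu,u\rangle t}|$ on that set. For generic parameters $(a-\alpha)\langle\mu,u\rangle\ne0$, and since all $m_i(v)$ are integers the excluded discs lie in an $\varepsilon$-neighbourhood of a single refined lattice, so one can let $t\to\infty$ inside the allowed set along a direction in which $\mathrm{Re}\bigl((a-\alpha)\langle\mu,u\rangle t\bigr)\to-\infty$; this forces $\varphi_{Kr}([M])=0$. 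Conceptually this is an elliptic version of the Atiyah--Hirzebruch vanishing of $\hat A$ on $S^1$-manifolds: rigidity turns $g$ into an explicit exponential, the fixed-point formula makes it small in a half-plane direction, and the two are incompatible unless the constant is zero — and the place where the quasitoric hypothesis is genuinely used is the inequality $\mu\ne0$ (for $S^6$, where the corresponding weight-sum vanishes, the argument correctly breaks down, which is why rigidity on $S^6$ alone is weaker). The main obstacle in executing this plan is exactly the last step: making the asymptotic $\Phi(y)=e^{ay}\psi(y)$ and the choice of admissible escape direction precise, and verifying carefully that the formal identity of the second step legitimately specialises to, and extends over, a genuine elliptic curve; the earlier steps are routine bookkeeping.
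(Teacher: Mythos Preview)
The paper does not itself prove Theorem~\ref{zero}; it is quoted from \cite{bu-pa-ra10} and \cite{bu-pa15}, and your sketch essentially reproduces the argument found there: rigidity plus the fixed-point formula, the $SU$-condition making the weight-sum $\sum_i w_i(p_v)=\mu$ independent of $v$, and the key observation that the quasitoric hypothesis enters precisely through $\mu\ne 0$ (so that $N=\langle\mu,u\rangle\ne 0$ for generic $u$). All of that is correct and well identified.

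You rightly flag the last step as the obstacle, but the specific route you propose does not go through as written. Writing $\Phi(y)=e^{ay}\psi(y)$ with $\psi$ periodic for one lattice generator $2\omega_1$ is fine, yet $\psi$ is \emph{not} bounded away from $L$: under the second generator one has $\psi(y+2\omega_2)=\rho\,\psi(y)$ for a multiplier $\rho$ with $|\rho|\ne 1$ generically, so $|\psi|$ grows exponentially in the $\omega_2$-direction and the estimate $|g(t)|\le C\,|e^{aNt}|$ fails on any set reaching infinity. The clean fix, which is what the cited references actually do, bypasses growth estimates entirely and uses the quasi-periodicity of $\Phi$ directly: from $\Phi(y+2\omega_k)=c_k\Phi(y)$ and $\sum_i m_i(v)=N$ for every $v$ one gets $g(t+2\omega_k)=c_k^{\,N}g(t)$, and combining this with the entire identity $g(t)=e^{\alpha N t}\varphi_{Kr}([M])$ yields $(e^{2\alpha N\omega_k}-c_k^{\,N})\,\varphi_{Kr}([M])=0$ for $k=1,2$. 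Since $\alpha$ is a free parameter of the Krichever genus, independent of the lattice and of the remaining parameters sitting inside $c_1,c_2$, these two constraints are violated for generic $\alpha$, forcing $\varphi_{Kr}([M])=0$. So your plan is the right one; only the endgame needs to be rerouted from an Atiyah--Hirzebruch-style growth bound to a straightforward comparison of multipliers.
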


\section{Main result}

The $6$-dimensional sphere $S^6$ can be obtained as the homogeneous
space $G_2/SU(3)$ for the exceptional Lie group $G_2$. Here $SU(3)$ is the
centraliser of an element of order 3. Furthermore, $S^6 = G_2/SU(3)$ admits a $T^2$-invariant almost complex structure for the maximal torus $T^2 \subset G_2$ (see \cite[Example 9.6.10]{bu-pa15}).

Buchstaber and Panov proved the following result.

\begin{theorem}[{\cite[Theorem 9.7.13]{bu-pa15}}]\label{s6}
    Let $\varphi \colon \varOmega^U_* \to R$ be a complex genus which is rigid on $S^6$ with the described above $T^2$-invariant almost complex structure.
    \begin{enumerate}
\item  If $\varphi([S^6])=c\ne 0$, then $\varphi$ is the Krichever genus with parameter $b_2=c\ne 0$.
\item  If $\varphi([S^6])=0$, then $\varphi$ can be an arbitrary genus with the exponent of the form $f(x) = e^{\alpha x}g(x)$, where $g(x) = x + \ldots$ is an odd power series.
\end{enumerate}
\end{theorem}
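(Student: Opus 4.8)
The plan is to convert the rigidity of $\varphi$ on $S^6$ into a functional equation for its exponent $f$ by means of the localisation formula (Theorem~\ref{loc}), and then to analyse that equation.

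First I would record the fixed-point data of the $T^2$-action on $S^6=G_2/SU(3)$. Since $T^2$ is a maximal torus of $G_2$ sitting inside the maximal-rank subgroup $SU(3)$, the fixed set is identified with the coset space $W(G_2)/W(SU(3))$, so there are exactly $|W(G_2)|/|W(SU(3))|=12/6=2$ isolated fixed points; because the stably complex structure comes from an honest almost complex structure, both have sign $+1$. The tangent representation at one fixed point is the holomorphic summand $\mathbf 3$ of the isotropy representation $\mathfrak g_2/\mathfrak{su}(3)\cong \mathbf 3\oplus\overline{\mathbf 3}$ of $SU(3)$, so its weights are $w_1,w_2,w_3\in\Z^2$ with $w_1+w_2+w_3=0$ and, say, $w_1,w_2$ linearly independent; at the other fixed point the weights are $-w_1,-w_2,-w_3$ (the non-trivial coset is represented by $-1\in W(G_2)$). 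Substituting this into Theorem~\ref{loc} and passing to the coordinates $u=\langle w_1,x\rangle$, $v=\langle w_2,x\rangle$ (a $\Q$-linear change of variables, admissible as $R$ is a $\Q$-algebra), the rigidity of $\varphi$ on $S^6$ with $\varphi([S^6])=c$ becomes the single functional equation
\[
  \frac{1}{f(u)\,f(v)\,f(-u-v)}+\frac{1}{f(-u)\,f(-v)\,f(u+v)}=c\qquad\text{in }R[[u,v]].
\]

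For part~(2), where $c=0$: clearing denominators and dividing by $f(u)f(v)f(u+v)$ gives $\rho(u)\rho(v)=-\rho(u+v)$ for $\rho(x):=f(-x)/f(x)=-1+\ldots$. Thus $-\rho$ satisfies $(-\rho)(u+v)=(-\rho)(u)(-\rho)(v)$ with $(-\rho)(0)=1$, so over the $\Q$-algebra $R$ one gets $-\rho(x)=e^{\lambda x}$ with $\lambda=-\rho'(0)$; this says exactly $f(-x)=-e^{\lambda x}f(x)$, equivalently $f(x)=e^{\alpha x}g(x)$ with $\alpha=-\lambda/2$ and $g$ odd. Conversely, for any such $f$ the identity $u+v+(-u-v)=0$ makes the exponentials cancel and the two summands become $\mp 1/\bigl(g(u)g(v)g(u+v)\bigr)$, which sum to $0=c$; hence every genus with exponent $e^{\alpha x}g(x)$, $g$ odd, is rigid on $S^6$. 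This gives part~(2).

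For part~(1), where $c\ne 0$: the equation is unchanged under $f(x)\mapsto e^{\alpha x}f(x)$ (again because $u+v+w=0$ absorbs the extra exponential), so we may normalise $f(x)=x+O(x^3)$ and set $\Phi(x):=1/f(x)=x^{-1}+O(x)$ (no constant term); the equation reads $\Phi(u)\Phi(v)\Phi(w)+\Phi(-u)\Phi(-v)\Phi(-w)=c$ for $u+v+w=0$. Expanding in $u$ at $u=0$, the $u^{-1}$-coefficient vanishes identically, the $u^{0}$-coefficient gives $\Phi'(v)\Phi(-v)+\Phi(v)\Phi'(-v)=-c$, and one further $v$-differentiation yields $\Phi''(v)\Phi(-v)=\Phi(v)\Phi''(-v)$, so $p(v):=\Phi''(v)/\Phi(v)$ is an even Laurent series with leading term $2v^{-2}$. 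Hence $\Phi(v)$ and $\Phi(-v)$ are two solutions of $y''=p(v)y$ with constant Wronskian equal to $c$. The core of the proof — and, I expect, the main obstacle — is to extract enough from the remaining ($u^{\ge1}$) coefficients of the functional equation, using its full $S_3$-symmetry in $\{u,v,-u-v\}$, to upgrade ``$p$ is even'' to ``$p(v)=2\wp(v)+\mathrm{const}$'' for the formal expansion of a Weierstrass $\wp$: the $u^0$-relation alone already pins down a Taylor coefficient of $\Phi$ in terms of $c$ and determines the even-index coefficients of $\Phi$ from the odd-index ones, while the higher relations should cut the odd-index coefficients down to the two further parameters, so that the solution set is exactly the rational expansions of elliptic Baker--Akhiezer functions (the reverse inclusion being Theorem~\ref{krig} applied to $S^6$). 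Together with the normalising factor $e^{\alpha x}$ this says $f$ is the Krichever exponent, and identifying the constant Wronskian $c$ with the corresponding elliptic parameter (a short computation with the Weierstrass $\sigma$- and $\zeta$-functions) gives $b_2=c$. In effect this last step is, or can be deduced from, the known characterisation of Baker--Akhiezer functions by precisely such a three-term functional equation.
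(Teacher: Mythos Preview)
The paper does not prove this theorem; it is quoted from \cite[Theorem~9.7.13]{bu-pa15} and used as input to the main argument. So there is no proof in the paper to compare your proposal against, and I can only comment on the proposal itself.

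Your treatment of part~(2) is correct and complete. The reduction of the rigidity equation on $S^6$ to
\[
\frac{1}{f(u)f(v)f(-u-v)}+\frac{1}{f(-u)f(-v)f(u+v)}=c
\]
is right, and from $c=0$ the passage to $\rho(u)\rho(v)=-\rho(u+v)$ for $\rho(x)=f(-x)/f(x)$, hence $-\rho(x)=e^{\lambda x}$ and $f(x)=e^{\alpha x}g(x)$ with $g$ odd, is clean. The converse check is also fine.

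Part~(1), however, has a genuine gap. You correctly normalise away the exponential, set $\Phi=1/f$, and read off from the $u^{-1}$ and $u^{0}$ terms that $\Phi(v)$ and $\Phi(-v)$ both solve $y''=p(v)y$ for an even $p$ with principal part $2v^{-2}$, with Wronskian $c$. But this much imposes essentially no constraint on $p$: \emph{any} even Laurent series $p(v)=2v^{-2}+\ldots$ produces such a pair, so the $u^0$ relation alone cannot distinguish the Weierstrass case from a generic one. The actual work is exactly the step you label the ``main obstacle'' and then do not carry out, namely extracting from the higher $u$-coefficients (or from the full $S_3$-symmetry) that $p=2\wp+\mathrm{const}$. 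Your final sentence defers this to ``the known characterisation of Baker--Akhiezer functions by precisely such a three-term functional equation'', but that characterisation is, up to the translation you have already made, precisely the statement of part~(1); invoking it without an independent reference and a check of its hypotheses is circular. As written, part~(1) is a reasonable outline of how the argument in \cite{bu-pa15} goes, not a proof.
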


In the paper \cite[Construction 4.9]{lu-pa16} Lü and Panov constructed a family $\widetilde L(2k_1, 2k_2+1)$ of omnioriented quasitoric $SU$-manifolds. We will interested in the manifold $\widetilde L(2,3)$.

Define the triangle $\varDelta^2\subset \R^2$ by the inequalities $x\ge 0,\, y \ge0,\, x+y\le1$, and define the tetrahedron $\varDelta^3\subset \R^3$ by the inequalities $x\ge 0,\, y \ge0,\, z\ge 0,\, x+y+z\le1$. Then the polytope $P=\varDelta^2 \times \varDelta^3 \subset \R^5$ is defined by the inequalities $$x_1\ge 0,\, x_2 \ge0,\, x_1+x_2\le1, \,x_3\ge 0,\, x_4 \ge0,\, x_5\ge 0,\, x_3+x_4+x_5\le1$$

Denote the facets of $\varDelta^2\subset\R^2$:

$F_1$ with the inward-pointing normal $(1,0)$,

$F_2$ with the inward-pointing normal $(0,1)$,

and $F_3$ with the inward-pointing normal $(-1,-1)$.

Denote the facets of $\varDelta^3\subset\R^3$:

$E_1$ with the inward-pointing normal $(1,0,0)$,

$E_2$ with the inward-pointing normal $(0,1,0)$,

$E_3$ with the inward-pointing normal $(0,0,1)$,

and $E_4$ with the inward-pointing normal $(-1,-1,-1)$.

Then the polytope $P=\varDelta^2\times\varDelta^3\subset\R^5$ has seven facets: $\widetilde F_1 = F_1 \times \Delta^3$, $\widetilde F_2 = F_2 \times \Delta^3$, $\widetilde F_3 =F_3 \times \Delta^3$, $\widetilde E_1 =\Delta^2 \times E_1$, $\widetilde E_2 =\Delta^2 \times E_2$, $\widetilde E_3 =\Delta^2 \times E_3$, and $\widetilde E_4 =\Delta^2 \times E_4$. The corresponding inward-pointing normals are $a_1=(1,0,0,0,0)$, $a_2=(0,1,0,0,0)$, $a_3=(-1,-1,0,0,0)$, $a_4=(0,0,1,0,0)$, $a_5=(0,0,0,1,0)$, $a_6=(0,0,0,0,1)$, and $a_7=(0,0,-1,-1,-1)$.

Then $\widetilde L(2,3)$ can be defined as the omnioriented quasitoric manifold over the polytope $P=\varDelta^2\times\varDelta^3\subset\R^5$ and with the matrix  $$\Lambda = \begin{pmatrix}
    1&0&1&0&0&0&0\\
    0&1&-1&0&0&0&0\\
    0&0&1&1&0&0&1\\
    0&0&0&0&1&0&-1\\
    0&0&0&0&0&1&1
\end{pmatrix}$$

We have the action of $T^5$ on $\widetilde L(2,3)$ with twelve fixed points corresponding to the vertices of $\varDelta^2\times\varDelta^3$. Any vertex $v$ of $\varDelta^2\times\varDelta^3$ has the form $v_1\times v_2$, where $v_1$ is a vertex of $\varDelta^2$ and $v_2$ is a vertex of $\varDelta^3$. If $v_1=F_{i_1} \cap F_{i_2}$ and $v_2 = E_{j_1} \cap E_{j_2} \cap E_{j_3}$, then $v=v_1\times v_2 =\widetilde F_{i_1} \cap\widetilde F_{i_2}\cap \widetilde E_{j_1} \cap\widetilde E_{j_2} \cap\widetilde E_{j_3}$.

\begin{theorem}\label{main}
    If a complex genus $\varphi \colon \varOmega^U_* \to R$ is rigid on $S^6$, $\varphi([S^6])=0$ and $\varphi$ is rigid on $\widetilde L(2,3)$, then its exponent has the form $f(x)=e^{\alpha x}\mathrm{sn}(x)$, that is, $\varphi$ is the Krichever genus with parameter $b_2=0$.
\end{theorem}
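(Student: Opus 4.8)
The plan is to feed the manifold $\widetilde L(2,3)$ into the localisation formula of Theorem~\ref{loc} and extract from the resulting identity enough relations on the exponent $f$ to identify it with $e^{\alpha x}\mathrm{sn}(x)$. By Theorem~\ref{s6}(2), the hypotheses that $\varphi$ is rigid on $S^6$ and $\varphi([S^6])=0$ already yield $f(x)=e^{\alpha x}g(x)$ for some odd power series $g(x)=x+g_3x^3+g_5x^5+\cdots$ over $R$. Recall that $\mathrm{sn}_{\delta,\varepsilon}$ is the unique odd power series satisfying $(g')^2=1-2\delta g^2+\varepsilon g^4$, and that matching the $x^2$- and $x^4$-coefficients of this equation forces $\delta=-3g_3$ and $\varepsilon=10g_5-3g_3^2$; so the whole problem reduces to showing that rigidity on $\widetilde L(2,3)$ forces $g$ to satisfy this differential equation.

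The first step is to assemble the fixed-point data. The twelve fixed points correspond to the vertices $v=v_1\times v_2$ of $\varDelta^2\times\varDelta^3$; for each one forms the $5\times5$ matrix $\Lambda_v$ from the columns of $\Lambda$ indexed by the five facets through $v$, reads off the weights $w_1(p),\ldots,w_5(p)$ as the rows of $(\Lambda_v)^{-1}$, and computes $\sigma(p)=\mathrm{sign}\bigl(\mathrm{det}(\Lambda_v)\,\mathrm{det}(a_{j_1},\ldots,a_{j_5})\bigr)$ from the normals $a_1,\ldots,a_7$ listed above. Because $P$ is a product and $\Lambda$ has the shape in which columns $1,2,3$ only meet rows $1,2,3$ while columns $4,5,6,7$ only meet rows $3,4,5$, each $\Lambda_v$ is block-triangular up to a permutation of columns, and these twelve computations split into a $\varDelta^2$-part (in the variables $x_1,x_2$) and a $\varDelta^3$-part (in $x_3,x_4,x_5$) coupled only through the third coordinate. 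It is precisely this twist — the entry in row $3$ of column $3$ — that makes $\widetilde L(2,3)$ an $SU$-manifold rather than $\C P^2\times\C P^3$, and it will be the origin of the new constraint.

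Next I would run these data through Theorem~\ref{loc}. Since $\widetilde L(2,3)$ is an $SU$-manifold, the weight sums $\sum_{i=1}^5 w_i(p)$ are equal for all fixed points (a consequence of $c_1=0$; here they all equal $(1,1,1,1,1)$ because every column of $\Lambda$ sums to $1$), so the exponential factors $\prod_i e^{-\alpha\langle w_i(p),x\rangle}$ contribute one common factor that pulls out of the sum. Rigidity of $\varphi$ on $\widetilde L(2,3)$ then reads
$$\varphi([\widetilde L(2,3)])\cdot e^{\alpha(x_1+\cdots+x_5)}=\sum_{p}\sigma(p)\prod_{i=1}^5\frac{1}{g(\langle w_i(p),x\rangle)}.$$
The right-hand side is an odd function of $x$ (a product of five odd factors, with the signs $\sigma(p)$ unchanged under $x\mapsto-x$), so comparison with the left-hand side forces $\varphi([\widetilde L(2,3)])=0$ — consistent with Theorem~\ref{zero} — and leaves the homogeneous identity $\sum_{p}\sigma(p)\prod_{i=1}^5 g(\langle w_i(p),x\rangle)^{-1}=0$, a power-series identity in $x_1,\ldots,x_5$.

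It remains to deduce from this single identity that $g$ satisfies $(g')^2=1-2\delta g^2+\varepsilon g^4$, and this is the main obstacle. The identity amounts to an infinite system of polynomial equations on the coefficients $g_3,g_5,g_7,\ldots$; the Krichever genus with exponent $e^{\alpha x}\mathrm{sn}_{\delta,\varepsilon}(x)$ satisfies it for every choice of $g_3,g_5$, being rigid on the $SU$-manifold $\widetilde L(2,3)$ by Theorem~\ref{krig} with value $0$ by Theorem~\ref{zero}. I would finish by showing it is the only solution: either by verifying that the system determines each $g_{2\ell+1}$, $\ell\ge3$, from the lower coefficients through an equation whose leading coefficient is invertible in $R$, or, equivalently, by specialising $x_1,\ldots,x_5$ along a well-chosen line so that the $\varDelta^2$- and $\varDelta^3$-parts of the twelve-term sum become $\C P^2$- and $\C P^3$-type sub-sums and the identity collapses to a one-variable functional equation whose only normalised odd solutions are the Jacobi sines. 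The delicate point is exactly this non-degeneracy check — that one ten-dimensional example is restrictive enough to leave no freedom beyond $g_3$ and $g_5$; this is a finite computation, and the product structure of $\varDelta^2\times\varDelta^3$ together with the small integer weights read off from $\Lambda$ should make it feasible by hand.
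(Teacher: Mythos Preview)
Your setup is sound and matches the paper: the fixed-point data, the observation that the column sums of $\Lambda$ are all $1$ so the weight sums at every vertex equal $(1,1,1,1,1)$, the factoring of $e^{-\alpha(x_1+\cdots+x_5)}$, and the reduction via Theorem~\ref{s6}(2) to an odd series $g$ are all correct. Your parity argument for $c=\varphi([\widetilde L(2,3)])=0$ is in fact cleaner than the paper's (which deduces $c=0$ only at the very end, from the parity of terms in the final ODE).

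The gap is the part you flag yourself as ``the main obstacle'': you do not actually extract from the twelve-term identity any relation that pins $g$ down, and neither of your proposed finishing strategies is workable as stated. Comparing coefficients degree by degree is not a finite computation --- you would need to exhibit, for \emph{every} $\ell\ge3$, a relation determining $g_{2\ell+1}$ with invertible leading coefficient, and there is no a~priori reason this pattern stabilises. Specialising along a line is also problematic: the twelve terms individually have poles along the hyperplanes $\langle w_i(p),x\rangle=0$, so although the \emph{sum} is regular, restricting it to a line does not produce a manageable closed functional equation; and your heuristic that the ``$\varDelta^2$-part'' and ``$\varDelta^3$-part'' separate into $\C P^2$- and $\C P^3$-type sub-sums is exactly what the twist in column~3 of $\Lambda$ destroys. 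What the paper actually does is quite different and rather delicate: it first restricts to the hyperplane $x_1+\cdots+x_5=0$, then groups the twelve summands into six pairs that share the factor $1/g(x_1+x_2)$ or $1/g(x_3+x_4)$ and lets $x_1+x_2\to0$ and $x_3+x_4\to0$ (so L'H\^opital-type limits turn the pole cancellations into derivatives of $g$); this collapses the identity to a two-variable relation in $x,y$. A further substitution $x=y+z$ and symmetrisation in $y,z$, followed by expanding to order $z^3$, yields a single third-order ODE for $g$. Writing $(g')^2=1+\sum_{k\ge1}a_kg^{2k}$ (legitimate since $(g')^2$ is even and $g$ is a local coordinate) and substituting into the ODE forces $a_k=0$ for $k\ge3$, i.e.\ $(g')^2=1+a_1g^2+a_2g^4$, which is the elliptic-sine equation. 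None of this is visible from a generic one-variable specialisation; the iterated-limit manoeuvre is the missing idea.
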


From Theorem \ref{krig}, Theorem \ref{s6} and Theorem \ref{main} we obtain the following

\begin{corollary}
    Let $\varphi \colon \varOmega^U_* \to R$ be a complex genus. Then the following are equivalent:
    \begin{enumerate}
        \item $\varphi$ is the Krichever genus;
        \item $\varphi$ is rigid on $SU$-manifolds;
        \item $\varphi$ is rigid on $S^6$ and $\widetilde L(2,3)$.
    \end{enumerate}
\end{corollary}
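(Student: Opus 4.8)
The plan is to start from the classification of genera rigid on $S^6$. Since $\varphi$ is rigid on $S^6$ with $\varphi([S^6])=0$, Theorem~\ref{s6}(2) gives $f(x)=e^{\alpha x}g(x)$ with $g(x)=x+g_3x^3+g_5x^5+\cdots$ an odd power series; write $\psi$ for the complex genus with exponent $g$. The goal is to show that $g$ is the Jacobi elliptic sine, i.e.\ that $g''=-2\delta g+2\varepsilon g^3$ for some $\delta,\varepsilon\in R$; then $f(x)=e^{\alpha x}\mathrm{sn}(x)$ as required.

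First I would record the fixed-point data of $\widetilde L(2,3)$. Its twelve fixed points are the products $v_1\times v_2$ of a vertex of $\varDelta^2$ and a vertex of $\varDelta^3$. Because the first three columns of $\Lambda$ are supported in rows $1,2,3$ while the last four are supported in rows $3,4,5$ (and dually $a_1,a_2,a_3$ are supported in coordinates $1,2$ and $a_4,\dots,a_7$ in coordinates $3,4,5$), both $\Lambda_v$ and the matrix of inward normals at $v$ are block triangular, so $\det\Lambda_v$ and $\det(a_{j_1},\dots,a_{j_5})$ each factor into a part depending only on $v_1$ and a part depending only on $v_2$; hence the sign factors, $\sigma(v_1\times v_2)=\sigma_2(v_1)\sigma_3(v_2)$. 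A direct computation of $(\Lambda_v)^{-1}$ then shows that the three weights of the $\varDelta^3$-facets at $v_1\times v_2$, paired with $x=(x_1,\dots,x_5)$, coincide with the weights of the $6$-dimensional quasitoric $SU$-manifold $B$ over $\varDelta^3$ whose characteristic matrix is the lower right $3\times4$ block of $\Lambda$, evaluated at $(x_3-px_1-qx_2,\ x_4,\ x_5)$, where the integers $p,q$ depend only on $v_1$ (geometrically $\widetilde L(2,3)$ is a $\C P^2$-bundle over $B$). Finally, every column of $\Lambda$ sums to $1$, which forces $\sum_{i=1}^5 w_i(p)=(1,1,1,1,1)$ at every fixed point $p$.

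Next I would apply Theorem~\ref{loc}. Substituting $f=e^{\alpha x}g$ and using $\sum_i w_i(p)=(1,\dots,1)$, the factor $e^{-\alpha(x_1+\cdots+x_5)}$ is common to all twelve summands, so $\varphi^{T^5}([\widetilde L(2,3)])=e^{-\alpha(x_1+\cdots+x_5)}\,\Sigma$ with $\Sigma=\sum_p\sigma(p)\prod_{i=1}^5 g(\langle w_i(p),x\rangle)^{-1}=\psi^{T^5}([\widetilde L(2,3)])$. Since $g$ is odd, each factor $g(\langle w_i(p),x\rangle)^{-1}$ has, in the fraction field, only odd-degree homogeneous components, hence so does each product over the five weights; but $\Sigma$ is a genuine power series, so all its homogeneous components of even or of negative degree vanish. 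In particular its constant term $\psi([\widetilde L(2,3)])=0$, whence also $\varphi([\widetilde L(2,3)])=0$, and rigidity gives $\Sigma\equiv 0$. Substituting the factorised data turns $\Sigma\equiv 0$ into
$$
  h(x_3)\,g(x_1+x_2)=h(x_3+x_2)\,g(x_1)+h(x_3-x_1)\,g(x_2),
$$
where $h(y_1,y_2,y_3)=\psi^{T^3}([B])$ (its last two arguments held at $x_4,x_5$), an explicit alternating sum of four triple products $g(\cdot)^{-1}$. Differentiating in $x_1$ at $x_1=0$ gives $h(x_3+x_2)=h(x_3)g'(x_2)+h'(x_3)g(x_2)$, and feeding this back (using that $g$ is odd, $g'$ even) reduces the identity to $h(x_3)\bigl(g(x_1+x_2)-g(x_1)g'(x_2)-g'(x_1)g(x_2)\bigr)\equiv 0$. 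Working over an integral domain (it suffices to treat $R=R/\mathfrak p$), either $h\not\equiv0$, and then $g$ obeys the sine addition theorem, so $g''=\mu g$ for a constant $\mu$, i.e.\ $g=\mathrm{sn}$ with $\varepsilon=0$ and we are done, or $h\equiv 0$, i.e.\ $\psi$ is $T^3$-rigid on $B$.

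The decisive case is $h\equiv 0$. Here I would specialise the four-term identity $h(y_1,y_2,y_3)\equiv0$ to $y_1=y_2=u$, $y_3=v$, clear denominators, and extract the coefficient of $v^2$, obtaining
$$
  g(2u)\bigl((g'(u))^2-g(u)g''(u)\bigr)=2g(u)g'(u).
$$
Expanding in powers of $u$, the coefficient of $u^{2n+1}$ is a relation among $g_1,\dots,g_{2n+1}$ in which $g_{2n+1}$ occurs with coefficient $2^{2n+1}-8n^2$; this is $0$ for $n=1,2$ (so $g_3,g_5$ stay free, as they should since they encode $\delta,\varepsilon$) but is nonzero for every $n\ge3$, so these relations determine $g_7,g_9,\dots$ uniquely from $g_3,g_5$. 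Since the Krichever genus with $b_2=0$ and parameters matching those $g_3,g_5$ is rigid on $SU$-manifolds (Theorem~\ref{krig}) and vanishes on the quasitoric $SU$-manifold $B$ (Theorem~\ref{zero}), its exponent $e^{\alpha x}\mathrm{sn}(x)$ satisfies the same system of relations, and by uniqueness $g=\mathrm{sn}$. I expect this last step to be the main obstacle: the whole reason $\widetilde L(2,3)$ is an adequate test manifold — where a generic $SU$-manifold would not be — is that its fixed-point data yield, after the reduction above, a recursion for the Taylor coefficients of $g$ that is non-degenerate at every order $\ge7$; verifying that non-degeneracy ($2^{2n+1}\ne8n^2$ for $n\ge3$), and that the elliptic sine solves the recursion, is what closes the argument. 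The corollary is then immediate: $(1)\Rightarrow(2)$ is Theorem~\ref{krig}, $(2)\Rightarrow(3)$ holds because $S^6$ and $\widetilde L(2,3)$ are $SU$-manifolds, and $(3)\Rightarrow(1)$ is Theorems~\ref{s6} and~\ref{main}.
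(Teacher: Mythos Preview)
Your final sentence gives exactly the paper's argument for the Corollary itself: $(1)\Rightarrow(2)$ is Theorem~\ref{krig}, $(2)\Rightarrow(3)$ is trivial since $S^6$ and $\widetilde L(2,3)$ are $SU$-manifolds, and $(3)\Rightarrow(1)$ is Theorems~\ref{s6} and~\ref{main}. The bulk of your proposal is really an alternative proof of Theorem~\ref{main}, and it differs substantially from the paper's. The paper attacks the twelve-term rigidity equation head-on: it takes iterated limits $x_1+x_2\to0$, then $x_3+x_4\to0$ with $x_5=0$, collapsing everything to a two-variable identity; extracting the $z^3$-coefficient yields a third-order ODE for $g$, and after writing $(g')^2=1+\sum_{k\ge1} a_k g^{2k}$ one reads off $a_k=0$ for $k\ge3$, i.e.\ $g=\mathrm{sn}$. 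Your route instead exploits the block-triangular shape of $\Lambda$ to factor the localisation sum through $h=\psi^{T^3}([B])$, obtaining the clean three-term identity $h(x_3)\,g(x_1+x_2)=h(x_3+x_2)\,g(x_1)+h(x_3-x_1)\,g(x_2)$ (this factorisation checks out against the paper's weight tables), and then splits on whether $h\equiv0$; the decisive recursion $g(2u)\bigl((g')^2-gg''\bigr)=2gg'$ with leading coefficient $2^{2n+1}-8n^2$ is a nice replacement for the paper's limit gymnastics. What your approach buys is a conceptual explanation for why $\widetilde L(2,3)$ works (it fibres over a $6$-dimensional quasitoric $SU$-manifold $B$), at the cost of one soft spot: the dichotomy $h\equiv0$ versus $h\not\equiv0$ needs $R$ to be a domain, and passing to $R/\mathfrak p$ only proves the result modulo the nilradical, whereas the paper's ODE argument works over an arbitrary $\Q$-algebra. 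In fact your Case~1 is vacuous over a domain---if $h\not\equiv0$ forces the sine addition law then $g=\mathrm{sn}$ with $\varepsilon=0$, but for any $\mathrm{sn}$ Theorems~\ref{krig} and~\ref{zero} applied to $B$ give $h\equiv0$, a contradiction---so the recursion in Case~2 is doing all the real work.
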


\begin{proof}[Proof of Theorem \ref{main}]

Let's consider the weights and signs of fixed points of $\widetilde L(2,3)$.

\medskip

1) $v=(0,0,0,0,0)=\widetilde F_1\cap \widetilde F_2 \cap \widetilde E_1\cap \widetilde E_2\cap \widetilde E_3$

\medskip

$\Lambda_v=\begin{pmatrix}
1 & 0 & 0 & 0 & 0 \\
0 & 1 & 0 & 0 & 0 \\
0 & 0 & 1 & 0 & 0 \\
0 & 0 & 0 & 1 & 0 \\
0 & 0 & 0 & 0 & 1
\end{pmatrix}$ \quad $\Lambda^{-1}_v=\begin{pmatrix}
1 & 0 & 0 & 0 & 0 \\
0 & 1 & 0 & 0 & 0 \\
0 & 0 & 1 & 0 & 0 \\
0 & 0 & 0 & 1 & 0 \\
0 & 0 & 0 & 0 & 1
\end{pmatrix}$

\begin{multline*}
\sigma(v)=\det(\Lambda_v)\det(a_1, a_2, a_4, a_5, a_6)=\\
=\det\begin{pmatrix}
1 & 0 & 0 & 0 & 0 \\
0 & 1 & 0 & 0 & 0 \\
0 & 0 & 1 & 0 & 0 \\
0 & 0 & 0 & 1 & 0 \\
0 & 0 & 0 & 0 & 1
\end{pmatrix} \det\begin{pmatrix}
1 & 0 & 0 & 0 & 0 \\
0 & 1 & 0 & 0 & 0 \\
0 & 0 & 1 & 0 & 0 \\
0 & 0 & 0 & 1 & 0 \\
0 & 0 & 0 & 0 & 1
\end{pmatrix} = 1\cdot 1=1
\end{multline*}

\medskip

2) $v=(1,0,0,0,0)=\widetilde F_2\cap \widetilde F_3 \cap \widetilde E_1\cap \widetilde E_2\cap \widetilde E_3$

\medskip

$\Lambda_v=\begin{pmatrix}
0 & 1 & 0 & 0 & 0 \\
1 & -1 & 0 & 0 & 0 \\
0 & 1 & 1 & 0 & 0 \\
0 & 0 & 0 & 1 & 0 \\
0 & 0 & 0 & 0 & 1
\end{pmatrix}$ \quad $\Lambda^{-1}_v=\begin{pmatrix}
1 & 1 & 0 & 0 & 0 \\
1 & 0 & 0 & 0 & 0 \\
-1 & 0 & 1 & 0 & 0 \\
0 & 0 & 0 & 1 & 0 \\
0 & 0 & 0 & 0 & 1
\end{pmatrix}$

\begin{multline*}
\sigma(v)=\det(\Lambda_v)\det(a_2, a_3, a_4, a_5, a_6)=\\
=\det\begin{pmatrix}
0 & 1 & 0 & 0 & 0 \\
1 & -1 & 0 & 0 & 0 \\
0 & 1 & 1 & 0 & 0 \\
0 & 0 & 0 & 1 & 0 \\
0 & 0 & 0 & 0 & 1
\end{pmatrix} \det\begin{pmatrix}
0 & -1 & 0 & 0 & 0 \\
1 & -1 & 0 & 0 & 0 \\
0 & 0 & 1 & 0 & 0 \\
0 & 0 & 0 & 1 & 0 \\
0 & 0 & 0 & 0 & 1
\end{pmatrix} = (-1)\cdot 1=-1
\end{multline*}

\medskip

3) $v=(0,1,0,0,0)=\widetilde F_1\cap \widetilde F_3 \cap \widetilde E_1\cap \widetilde E_2\cap \widetilde E_3$

\medskip

$\Lambda_v=\begin{pmatrix}
1 & 1 & 0 & 0 & 0 \\
0 & -1 & 0 & 0 & 0 \\
0 & 1 & 1 & 0 & 0 \\
0 & 0 & 0 & 1 & 0 \\
0 & 0 & 0 & 0 & 1
\end{pmatrix}$ \quad $\Lambda^{-1}_v=\begin{pmatrix}
1 & 1 & 0 & 0 & 0 \\
0 & -1 & 0 & 0 & 0 \\
0 & 1 & 1 & 0 & 0 \\
0 & 0 & 0 & 1 & 0 \\
0 & 0 & 0 & 0 & 1
\end{pmatrix}$

\begin{multline*}
\sigma(v)=\det(\Lambda_v)\det(a_1, a_3, a_4, a_5, a_6)=\\
=\det\begin{pmatrix}
1 & 1 & 0 & 0 & 0 \\
0 & -1 & 0 & 0 & 0 \\
0 & 1 & 1 & 0 & 0 \\
0 & 0 & 0 & 1 & 0 \\
0 & 0 & 0 & 0 & 1
\end{pmatrix} \det\begin{pmatrix}
1 & -1 & 0 & 0 & 0 \\
0 & -1 & 0 & 0 & 0 \\
0 & 0 & 1 & 0 & 0 \\
0 & 0 & 0 & 1 & 0 \\
0 & 0 & 0 & 0 & 1
\end{pmatrix} = (-1)\cdot (-1)=1
\end{multline*}

\medskip

4) $v=(0,0,1,0,0)=\widetilde F_1\cap \widetilde F_2 \cap \widetilde E_2\cap \widetilde E_3\cap \widetilde E_4$

\medskip

$\Lambda_v=\begin{pmatrix}
1 & 0 & 0 & 0 & 0 \\
0 & 1 & 0 & 0 & 0 \\
0 & 0 & 0 & 0 & 1 \\
0 & 0 & 1 & 0 & -1 \\
0 & 0 & 0 & 1 & 1
\end{pmatrix}$ \quad $\Lambda^{-1}_v=\begin{pmatrix}
1 & 0 & 0 & 0 & 0 \\
0 & 1 & 0 & 0 & 0 \\
0 & 0 & 1 & 1 & 0 \\
0 & 0 & -1 & ] & 1 \\
0 & 0 & 1 & 0 & 0
\end{pmatrix}$

\begin{multline*}
\sigma(v)=\det(\Lambda_v)\det(a_1, a_2, a_5, a_6, a_7)=\\
=\det\begin{pmatrix}
1 & 0 & 0 & 0 & 0 \\
0 & 1 & 0 & 0 & 0 \\
0 & 0 & 0 & 0 & 1 \\
0 & 0 & 1 & 0 & -1 \\
0 & 0 & 0 & 1 & 1
\end{pmatrix} \det\begin{pmatrix}
1 & 0 & 0 & 0 & 0 \\
0 & 1 & 0 & 0 & 0 \\
0 & 0 & 0 & 0 & -1 \\
0 & 0 & 1 & 0 & -1 \\
0 & 0 & 0 & 1 & -1
\end{pmatrix} = 1\cdot (-1)=-1
\end{multline*}

\medskip

5) $v=(0,1,1,0,0)=\widetilde F_1\cap \widetilde F_3 \cap \widetilde E_2\cap \widetilde E_3\cap \widetilde E_4$

\medskip

$\Lambda_v=\begin{pmatrix}
1 & 1 & 0 & 0 & 0 \\
0 & -1 & 0 & 0 & 0 \\
0 & 1 & 0 & 0 & 1 \\
0 & 0 & 1 & 0 & -1 \\
0 & 0 & 0 & 1 & 1
\end{pmatrix}$ \quad $\Lambda^{-1}_v=\begin{pmatrix}
1 & 1 & 0 & 0 & 0 \\
0 & -1 & 0 & 0 & 0 \\
0 & 1 & 1 & 1 & 0 \\
0 & -1 & -1 & 0 & 1 \\
0 & 1 & 1 & 0 & 0
\end{pmatrix}$

\begin{multline*}
\sigma(v)=\det(\Lambda_v)\det(a_1, a_3, a_5, a_6, a_7)=\\
=\det\begin{pmatrix}
1 & 1 & 0 & 0 & 0 \\
0 & -1 & 0 & 0 & 0 \\
0 & 1 & 0 & 0 & 1 \\
0 & 0 & 1 & 0 & -1 \\
0 & 0 & 0 & 1 & 1
\end{pmatrix} \det\begin{pmatrix}
1 & -1 & 0 & 0 & 0 \\
0 & -1 & 0 & 0 & 0 \\
0 & 0 & 0 & 0 & -1 \\
0 & 0 & 1 & 0 & -1 \\
0 & 0 & 0 & 1 & -1
\end{pmatrix} = (-1)\cdot 1=-1
\end{multline*}

\medskip

6) $v=(1,0,1,0,0)=\widetilde F_2\cap \widetilde F_3 \cap \widetilde E_2\cap \widetilde E_3\cap \widetilde E_4$

\medskip

$\Lambda_v=\begin{pmatrix}
0 & 1 & 0 & 0 & 0 \\
1 & -1 & 0 & 0 & 0 \\
0 & 1 & 0 & 0 & 1 \\
0 & 0 & 1 & 0 & -1 \\
0 & 0 & 0 & 1 & 1
\end{pmatrix}$ \quad $\Lambda^{-1}_v=\begin{pmatrix}
1 & 1 & 0 & 0 & 0 \\
1 & 0 & 0 & 0 & 0 \\
-1 & 0 & 1 & 1 & 0 \\
1 & 0 & -1 & 0 & 1 \\
-1 & 0 & 1 & 0 & 0
\end{pmatrix}$

\begin{multline*}
\sigma(v)=\det(\Lambda_v)\det(a_2, a_3, a_5, a_6, a_7)=\\
=\det\begin{pmatrix}
0 & 1 & 0 & 0 & 0 \\
1 & -1 & 0 & 0 & 0 \\
0 & 1 & 0 & 0 & 1 \\
0 & 0 & 1 & 0 & -1 \\
0 & 0 & 0 & 1 & 1
\end{pmatrix} \det\begin{pmatrix}
0 & -1 & 0 & 0 & 0 \\
1 & -1 & 0 & 0 & 0 \\
0 & 0 & 0 & 0 & -1 \\
0 & 0 & 1 & 0 & -1 \\
0 & 0 & 0 & 1 & -1
\end{pmatrix} = (-1)\cdot (-1)=1
\end{multline*}

\medskip

7) $v=(0,0,0,1,0)=\widetilde F_1\cap \widetilde F_2 \cap \widetilde E_1\cap \widetilde E_3\cap \widetilde E_4$

\medskip

$\Lambda_v=\begin{pmatrix}
1 & 0 & 0 & 0 & 0 \\
0 & 1 & 0 & 0 & 0 \\
0 & 0 & 1 & 0 & 1 \\
0 & 0 & 0 & 0 & -1 \\
0 & 0 & 0 & 1 & 1
\end{pmatrix}$ \quad $\Lambda^{-1}_v=\begin{pmatrix}
1 & 0 & 0 & 0 & 0 \\
0 & 1 & 0 & 0 & 0 \\
0 & 0 & 1 & 1 & 0 \\
0 & 0 & 0 & 1 & 1 \\
0 & 0 & 0 & -1 & 0
\end{pmatrix}$

\begin{multline*}
\sigma(v)=\det(\Lambda_v)\det(a_1, a_2, a_4, a_6, a_7)=\\
=\det\begin{pmatrix}
1 & 0 & 0 & 0 & 0 \\
0 & 1 & 0 & 0 & 0 \\
0 & 0 & 1 & 0 & 1 \\
0 & 0 & 0 & 0 & -1 \\
0 & 0 & 0 & 1 & 1
\end{pmatrix} \det\begin{pmatrix}
1 & 0 & 0 & 0 & 0 \\
0 & 1 & 0 & 0 & 0 \\
0 & 0 & 1 & 0 & -1 \\
0 & 0 & 0 & 0 & -1 \\
0 & 0 & 0 & 1 & -1
\end{pmatrix} = 1\cdot 1=1
\end{multline*}

\medskip

8) $v=(0,1,0,1,0)=\widetilde F_1\cap \widetilde F_3 \cap \widetilde E_1\cap \widetilde E_3\cap \widetilde E_4$

\medskip

$\Lambda_v=\begin{pmatrix}
1 & 1 & 0 & 0 & 0 \\
0 & -1 & 0 & 0 & 0 \\
0 & 1 & 1 & 0 & 1 \\
0 & 0 & 0 & 0 & -1 \\
0 & 0 & 0 & 1 & 1
\end{pmatrix}$ \quad $\Lambda^{-1}_v=\begin{pmatrix}
1 & 1 & 0 & 0 & 0 \\
0 & -1 & 0 & 0 & 0 \\
0 & 1 & 1 & 1 & 0 \\
0 & 0 & 0 & 1 & 1 \\
0 & 0 & 0 & -1 & 0
\end{pmatrix}$

\begin{multline*}
\sigma(v)=\det(\Lambda_v)\det(a_1, a_3, a_4, a_6, a_7)=\\
=\det\begin{pmatrix}
1 & 1 & 0 & 0 & 0 \\
0 & -1 & 0 & 0 & 0 \\
0 & 1 & 1 & 0 & 1 \\
0 & 0 & 0 & 0 & -1 \\
0 & 0 & 0 & 1 & 1
\end{pmatrix} \det\begin{pmatrix}
1 & -1 & 0 & 0 & 0 \\
0 & -1 & 0 & 0 & 0 \\
0 & 0 & 1 & 0 & -1 \\
0 & 0 & 0 & 0 & -1 \\
0 & 0 & 0 & 1 & -1
\end{pmatrix} = (-1)\cdot (-1)=1
\end{multline*}

\medskip

9) $v=(1,0,0,1,0)=\widetilde F_2\cap \widetilde F_3 \cap \widetilde E_1\cap \widetilde E_3\cap \widetilde E_4$

\medskip

$\Lambda_v=\begin{pmatrix}
0 & 1 & 0 & 0 & 0 \\
1 & -1 & 0 & 0 & 0 \\
0 & 1 & 1 & 0 & 1 \\
0 & 0 & 0 & 0 & -1 \\
0 & 0 & 0 & 1 & 1
\end{pmatrix}$ \quad $\Lambda^{-1}_v=\begin{pmatrix}
1 & 1 & 0 & 0 & 0 \\
1 & 0 & 0 & 0 & 0 \\
-1 & 0 & 1 & 1 & 0 \\
0 & 0 & 0 & 1 & 1 \\
0 & 0 & 0 & -1 & 0
\end{pmatrix}$

\begin{multline*}
\sigma(v)=\det(\Lambda_v)\det(a_2, a_3, a_4, a_6, a_7)=\\
=\det\begin{pmatrix}
0 & 1 & 0 & 0 & 0 \\
1 & -1 & 0 & 0 & 0 \\
0 & 1 & 1 & 0 & 1 \\
0 & 0 & 0 & 0 & -1 \\
0 & 0 & 0 & 1 & 1
\end{pmatrix} \det\begin{pmatrix}
0 & -1 & 0 & 0 & 0 \\
1 & -1 & 0 & 0 & 0 \\
0 & 0 & 1 & 0 & -1 \\
0 & 0 & 0 & 0 & -1 \\
0 & 0 & 0 & 1 & -1
\end{pmatrix} = (-1)\cdot 1=-1
\end{multline*}

\medskip

10) $v=(0,0,0,0,1)=\widetilde F_1\cap \widetilde F_2 \cap \widetilde E_1\cap \widetilde E_2\cap \widetilde E_4$

\medskip

$\Lambda_v=\begin{pmatrix}
1 & 0 & 0 & 0 & 0 \\
0 & 1 & 0 & 0 & 0 \\
0 & 0 & 1 & 0 & 1 \\
0 & 0 & 0 & 1 & -1 \\
0 & 0 & 0 & 0 & 1
\end{pmatrix}$ \quad $\Lambda^{-1}_v=\begin{pmatrix}
1 & 0 & 0 & 0 & 0 \\
0 & 1 & 0 & 0 & 0 \\
0 & 0 & 1 & 0 & -1 \\
0 & 0 & 0 & 1 & 1 \\
0 & 0 & 0 & 0 & 1
\end{pmatrix}$

\begin{multline*}
\sigma(v)=\det(\Lambda_v)\det(a_1, a_2, a_4, a_5, a_7)=\\
=\det\begin{pmatrix}
1 & 0 & 0 & 0 & 0 \\
0 & 1 & 0 & 0 & 0 \\
0 & 0 & 1 & 0 & 1 \\
0 & 0 & 0 & 1 & -1 \\
0 & 0 & 0 & 0 & 1
\end{pmatrix} \det\begin{pmatrix}
1 & 0 & 0 & 0 & 0 \\
0 & 1 & 0 & 0 & 0 \\
0 & 0 & 1 & 0 & -1 \\
0 & 0 & 0 & 1 & -1 \\
0 & 0 & 0 & 0 & -1
\end{pmatrix} = 1\cdot (-1)=-1
\end{multline*}

\medskip

11) $v=(0,1,0,0,1)=\widetilde F_1\cap \widetilde F_3 \cap \widetilde E_1\cap \widetilde E_2\cap \widetilde E_4$

\medskip

$\Lambda_v=\begin{pmatrix}
1 & 1 & 0 & 0 & 0 \\
0 & -1 & 0 & 0 & 0 \\
0 & 1 & 1 & 0 & 1 \\
0 & 0 & 0 & 1 & -1 \\
0 & 0 & 0 & 0 & 1
\end{pmatrix}$ \quad $\Lambda^{-1}_v=\begin{pmatrix}
1 & 1 & 0 & 0 & 0 \\
0 & -1 & 0 & 0 & 0 \\
0 & 1 & 1 & 0 & -1 \\
0 & 0 & 0 & 1 & 1 \\
0 & 0 & 0 & 0 & 1
\end{pmatrix}$

\begin{multline*}
\sigma(v)=\det(\Lambda_v)\det(a_1, a_3, a_4, a_5, a_7)=\\
=\det\begin{pmatrix}
1 & 1 & 0 & 0 & 0 \\
0 & -1 & 0 & 0 & 0 \\
0 & 1 & 1 & 0 & 1 \\
0 & 0 & 0 & 1 & -1 \\
0 & 0 & 0 & 0 & 1
\end{pmatrix} \det\begin{pmatrix}
1 & -1 & 0 & 0 & 0 \\
0 & -1 & 0 & 0 & 0 \\
0 & 0 & 1 & 0 & -1 \\
0 & 0 & 0 & 1 & -1 \\
0 & 0 & 0 & 0 & -1
\end{pmatrix} = (-1)\cdot 1=-1
\end{multline*}

\medskip

12) $v=(1,0,0,0,1)=\widetilde F_2\cap \widetilde F_3 \cap \widetilde E_1\cap \widetilde E_2\cap \widetilde E_4$

\medskip

$\Lambda_v=\begin{pmatrix}
0 & 1 & 0 & 0 & 0 \\
1 & -1 & 0 & 0 & 0 \\
0 & 1 & 1 & 0 & 1 \\
0 & 0 & 0 & 1 & -1 \\
0 & 0 & 0 & 0 & 1
\end{pmatrix}$ \quad $\Lambda^{-1}_v=\begin{pmatrix}
1 & 1 & 0 & 0 & 0 \\
1 & 0 & 0 & 0 & 0 \\
-1 & 0 & 1 & 0 & -1 \\
0 & 0 & 0 & 1 & 1 \\
0 & 0 & 0 & 0 & 1
\end{pmatrix}$

\begin{multline*}
\sigma(v)=\det(\Lambda_v)\det(a_2, a_3, a_4, a_5, a_7)=\\
=\det\begin{pmatrix}
0 & 1 & 0 & 0 & 0 \\
1 & -1 & 0 & 0 & 0 \\
0 & 1 & 1 & 0 & 1 \\
0 & 0 & 0 & 1 & -1 \\
0 & 0 & 0 & 0 & 1
\end{pmatrix} \det\begin{pmatrix}
0 & -1 & 0 & 0 & 0 \\
1 & -1 & 0 & 0 & 0 \\
0 & 0 & 1 & 0 & -1 \\
0 & 0 & 0 & 1 & -1 \\
0 & 0 & 0 & 0 & -1
\end{pmatrix} = (-1)\cdot (-1)=1
\end{multline*}

So if a genus $\varphi$ is rigid on $\widetilde L(2,3)$, then its exponent $f(x)$ satisfies the rigidity equation

 \begin{multline*}
    c=\frac{1}{f(x_1)f(x_2)f(x_3)f(x_4)f(x_5)}-\frac{1}{f(x_1+x_2)f(x_1)f(x_3-x_1)f(x_4)f(x_5)}+\\
    +\frac{1}{f(x_1+x_2)f(-x_2)f(x_2+x_3)f(x_4)f(x_5)}-\frac{1}{f(x_1)f(x_2)f(x_3+x_4)f(x_5-x_3)f(x_3)}-\\
    -\frac{1}{f(x_1+x_2)f(-x_2)f(x_2+x_3+x_4)f(x_5-x_2-x_3)f(x_2+x_3)}+\\
    +\frac{1}{f(x_1+x_2)f(x_1)f(x_3+x_4-x_1)f(x_1-x_3+x_5)f(x_3-x_1)}+\\
    +\frac{1}{f(x_1)f(x_2)f(x_3+x_4)f(x_4+x_5)f(-x_4)}+\\
    +\frac{1}{f(x_1+x_2)f(-x_2)f(x_2+x_3+x_4)f(x_4+x_5)f(-x_4)}-\\
    -\frac{1}{f(x_1+x_2)f(x_1)f(-x_1+x_3+x_4)f(x_4+x_5)f(-x_4)}-
    \end{multline*}
    \begin{multline*}
    -\frac{1}{f(x_1)f(x_2)f(x_3-x_5)f(x_4+x_5)f(x_5)}-\\
    -\frac{1}{f(x_1+x_2)f(-x_2)f(x_2+x_3-x_5)f(x_4+x_5)f(x_5)}+\\
    +\frac{1}{f(x_1+x_2)f(x_1)f(-x_1+x_3-x_5)f(x_4+x_5)f(x_5)}    
\end{multline*}

If $\varphi$ is also rigid on $S^6$ and $\varphi([S^6])=0$, then by Theorem \ref{s6} we have  $f(x)=e^{\alpha x}g(x)$ for some odd power series $g(x)=x+\ldots$ 

Notice that the sum of arguments in each denominator is equal to $x_1+x_2+x_3+x_4+x_5$ (in fact, the sum of weights does not depend on fixed point for any $SU$-manifold, see \cite{kr74}). Hence we have

\begin{multline*}
    c=e^{-\alpha(x_1+x_2+x_3+x_4+x_5)}\Bigl(\frac{1}{g(x_1)g(x_2)g(x_3)g(x_4)g(x_5)}-\\
    -\frac{1}{g(x_1+x_2)g(x_1)g(x_3-x_1)g(x_4)g(x_5)}+\\
    +\frac{1}{g(x_1+x_2)g(-x_2)g(x_2+x_3)g(x_4)g(x_5)}-\frac{1}{g(x_1)g(x_2)g(x_3+x_4)g(x_5-x_3)g(x_3)}-\\
    -\frac{1}{g(x_1+x_2)g(-x_2)g(x_2+x_3+x_4)g(x_5-x_2-x_3)g(x_2+x_3)}+\\
    +\frac{1}{g(x_1+x_2)g(x_1)g(x_3+x_4-x_1)g(x_1-x_3+x_5)g(x_3-x_1)}+\\
    +\frac{1}{g(x_1)g(x_2)g(x_3+x_4)g(x_4+x_5)g(-x_4)}+\\
    +\frac{1}{g(x_1+x_2)g(-x_2)g(x_2+x_3+x_4)g(x_4+x_5)g(-x_4)}-\\
    -\frac{1}{g(x_1+x_2)g(x_1)g(-x_1+x_3+x_4)g(x_4+x_5)g(-x_4)}-\\
    -\frac{1}{g(x_1)g(x_2)g(x_3-x_5)g(x_4+x_5)g(x_5)}-\\
    -\frac{1}{g(x_1+x_2)g(-x_2)g(x_2+x_3-x_5)g(x_4+x_5)g(x_5)}+\\
    +\frac{1}{g(x_1+x_2)g(x_1)g(-x_1+x_3-x_5)g(x_4+x_5)g(x_5)}\Bigr)
\end{multline*}

\medskip

So setting $x_1+x_2+x_3+x_4+x_5=0$ we obtain the same rigidity equation on $g(x)$:

\begin{multline*}
    c=\frac{1}{g(x_1)g(x_2)g(x_3)g(x_4)g(x_5)}-\frac{1}{g(x_1+x_2)g(x_1)g(x_3-x_1)g(x_4)g(x_5)}+\\
    +\frac{1}{g(x_1+x_2)g(-x_2)g(x_2+x_3)g(x_4)g(x_5)}-\frac{1}{g(x_1)g(x_2)g(x_3+x_4)g(x_5-x_3)g(x_3)}-\\
    -\frac{1}{g(x_1+x_2)g(-x_2)g(x_2+x_3+x_4)g(x_5-x_2-x_3)g(x_2+x_3)}+
    \end{multline*}
    \begin{multline*}
    +\frac{1}{g(x_1+x_2)g(x_1)g(x_3+x_4-x_1)g(x_1-x_3+x_5)g(x_3-x_1)}+\\
    +\frac{1}{g(x_1)g(x_2)g(x_3+x_4)g(x_4+x_5)g(-x_4)}+\\
    +\frac{1}{g(x_1+x_2)g(-x_2)g(x_2+x_3+x_4)g(x_4+x_5)g(-x_4)}-\\
    -\frac{1}{g(x_1+x_2)g(x_1)g(-x_1+x_3+x_4)g(x_4+x_5)g(-x_4)}-\\
    -\frac{1}{g(x_1)g(x_2)g(x_3-x_5)g(x_4+x_5)g(x_5)}-\\
    -\frac{1}{g(x_1+x_2)g(-x_2)g(x_2+x_3-x_5)g(x_4+x_5)g(x_5)}+\\
    +\frac{1}{g(x_1+x_2)g(x_1)g(-x_1+x_3-x_5)g(x_4+x_5)g(x_5)}
\end{multline*}

\vspace{3ex}

1) Consider the limit of the sum of summands 2 and 3 as $x_1+x_2=t\to 0$.

\begin{multline*}
\frac{1}{g(x_1+x_2)g(-x_2)g(x_2+x_3)g(x_4)g(x_5)}-\frac{1}{g(x_1+x_2)g(x_1)g(x_3-x_1)g(x_4)g(x_5)}=\\
=\frac{1}{g(x_4)g(x_5)}\frac{1}{g(t)}\bigl(\frac{1}{g(x_1-t)g(x_3-(x_1-t))}-\frac{1}{g(x_1)g(x_3-x_1)}\bigr)=\\
=\frac{1}{g(x_4)g(x_5)}\frac{-t}{g(t)}\frac{1}{-t}\bigl(\frac{1}{g(x_1-t)g(x_3-(x_1-t))}-\frac{1}{g(x_1)g(x_3-x_1)}\bigr)\to\\
\to -\frac{1}{g(x_4)g(x_5)}\bigl(\frac{1}{g(x)g(x_3-x)}\bigr)'|_{x=x_1}=\frac{1}{g(x_4)g(x_5)}\bigl(\frac{g'(x_1)g(x_3-x_1)-g(x_1)g'(x_3-x_1)}{g(x_1)^2 g(x_3-x_1)^2}\bigr)
\end{multline*}

\vspace{3ex}

2) Consider the limit of the sum of summands 8 and 9 as $x_1+x_2=t\to 0$ under the additional restrictions $x_3+x_4=0$, $x_5=0$.

\begin{multline*}
\frac{1}{g(x_1+x_2)g(-x_2)g(x_2+x_3+x_4)g(x_4+x_5)g(-x_4)}-\\
    -\frac{1}{g(x_1+x_2)g(x_1)g(-x_1+x_3+x_4)g(x_4+x_5)g(-x_4)}=\\
    =\frac{1}{g(x_4)g(-x_4)}\frac{-t}{g(t)}\frac{1}{-t}\bigl(\frac{1}{g(x_1-t)g(-(x_1-t))}-\frac{1}{g(x_1)g(-x_1)}\bigr)\to\\
    \to-\frac{1}{g(x_4)g(-x_4)}\bigl(\frac{1}{g(x)g(-x)}\bigr)'|_{x=x_1}=\frac{1}{g(x_4)^2}\bigl(\frac{1}{g(x)^2}\bigr)'|_{x=x_1}=\frac{2g'(x_1)}{g(x_4)^2g(x_1)^3}
    \end{multline*}

\vspace{3ex}

3) Consider the limit of the sum of summands 5 and 6 as $x_1+x_2=t\to 0$ under the additional restrictions $x_3+x_4=0$, $x_5=0$.

\begin{multline*}
\frac{1}{g(x_1+x_2)g(x_1)g(x_3+x_4-x_1)g(x_1-x_3+x_5)g(x_3-x_1)}-
\end{multline*}
\begin{multline*}
-\frac{1}{g(x_1+x_2)g(-x_2)g(x_2+x_3+x_4)g(x_5-x_2-x_3)g(x_2+x_3)}=\\
=\frac{1}{g(x_1+x_2)g(x_1)g(-x_1)g(x_1-x_3)g(x_3-x_1)}-\\
-\frac{1}{g(x_1+x_2)g(-x_2)g(x_2)g(-x_2-x_3)g(x_2+x_3)}=\\
=\frac{t}{g(t)}\frac{1}{t}\bigl(\frac{1}{g(-x_2+t)g(-(-x_2+t))g(-x_3-x_2+t)g(x_3-(-x_2+t))}-\\
-\frac{1}{g(-x_2)g(x_2)g(-x_2-x_3)g(x_2+x_3)}\bigr)\to \bigl(\frac{1}{g(x)g(-x)g(-x_3+x)g(x_3-x)}\bigr)'|_{x=-x_2}=\\
=\bigl(\frac{1}{g(x)^2 g(x-x_3)^2}\bigr)'|_{x=-x_2}=\\
-\frac{2g'(-x_2)g(-x_2)g(x_2+x_3)^2+2g(x_2)^2g(-x_2-x_3)g'(-x_2-x_3)}{g(x_2)^4g(x_2+x_3)^4}=\\
=2\frac{g'(x_2)g(x_2+x_3)+g(x_2)g'(x_2+x_3)}{g(x_2)^3g(x_2+x_3)^3}
\end{multline*}

\vspace{3ex}

4) Consider the limit of the sum of summands 1 and 10 as $x_3+x_4=s\to 0$ under the additional restriction $x_1+x_2=0$ (as $x_1+x_2+x_3+x_4+x_5=0$ we also have $x_5=-s$).

    \begin{multline*}
    \frac{1}{g(x_1)g(x_2)g(x_3)g(x_4)g(x_5)}-\frac{1}{g(x_1)g(x_2)g(x_3-x_5)g(x_4+x_5)g(x_5)}=\\
    =\frac{1}{g(x_1)g(x_2)g(x_3)g(-s)}(\frac{1}{g(x_3+s)}-\frac{1}{g(x_3-s)})\to \frac{2g'(x_3)}{g(x_1)g(x_2)g(x_3)^3}
    \end{multline*}

    \vspace{3ex}

    5) Consider the limit of the sum of summands 4 and 7 as $x_3+x_4=s\to 0$ under the additional restriction $x_1+x_2=0$ (as $x_1+x_2+x_3+x_4+x_5=0$ we also have $x_5=-s$).

    \begin{multline*}
    -\frac{1}{g(x_1)g(x_2)g(x_3+x_4)g(x_5-x_3)g(x_3)}+\frac{1}{g(x_1)g(x_2)g(x_3+x_4)g(x_4+x_5)g(-x_4)}=\\
    =\frac{1}{g(x_1)g(x_2)g(x_3)g(s)}(\frac{1}{g(x_3+s)}-\frac{1}{g(x_3-s)})\to -\frac{2g'(x_3)}{g(x_1)g(x_2)g(x_3)^3}
    \end{multline*}
    
    \vspace{3ex}

6) Consider the limit of the sum of summands 11 and 12 as $x_1+x_2=t\to 0$.

    \begin{multline*}
    -\frac{1}{g(x_1+x_2)g(-x_2)g(x_2+x_3-x_5)g(x_4+x_5)g(x_5)}+\\
    +\frac{1}{g(x_1+x_2)g(x_1)g(-x_1+x_3-x_5)g(x_4+x_5)g(x_5)}=\\
    =\frac{1}{g(x_4+x_5)g(x_5)g(t)}\bigl(\frac{1}{g(-x_2+t)g(-(-x_2+t)+x_3-x_5)}-\frac{1}{g(-x_2)g(x_2+x_3-x_5)}\bigr)\to\\
    \to\frac{1}{g(x_4+x_5)g(x_5)}\bigl(\frac{1}{g(x)g(-x+x_3-x_5)}\bigr)'|_{x=-x_2}=
    \end{multline*}
    \begin{multline*}
    =-\frac{1}{g(x_4+x_5)g(x_5)}\bigl(\frac{g'(x_2)g(x_2+x_3-x_5)+g'(x_2+x_3-x_5)g(x_2)}{g(x_2)^2g(x_2+x_3-x_5)^2}\bigr)=\\
    =-\frac{1}{g(x_4+x_5)g(x_5)}(\frac{g'(x_1)g(-x_1+x_3-x_5)-g'(-x_1+x_3-x_5)g(x_1)}{g(x_1)^2g(-x_1+x_3-x_5)^2})
    \end{multline*}

    \vspace{3ex}

    Now consider the limit of the sum of the results of 1) and 6) as $x_3+x_4=s\to 0$ (and hence $x_5=-s\to 0$).

    \begin{multline*}
    \frac{1}{g(x_4)g(x_5)}\bigl(\frac{g'(x_1)g(x_3-x_1)-g(x_1)g'(x_3-x_1)}{g(x_1)^2 g(x_3-x_1)^2}\bigr)-\\
    -\frac{1}{g(x_4+x_5)g(x_5)}\bigl(\frac{g'(x_1)g(-x_1+x_3-x_5)-g'(-x_1+x_3-x_5)g(x_1)}{g(x_1)^2g(-x_1+x_3-x_5)^2}\bigr)=\\
    =\frac{s}{g(s)}\frac{1}{g(x_1)^2}\frac{1}{s}\bigl(\frac{g'(x_1)g(-x_1+x_3+s)-g'(-x_1+x_3+s)g(x_1)}{g(-x_1+x_3+s)^2g(-x_3)}-\\
    -\frac{g'(x_1)g(x_3-x_1)-g(x_1)g'(x_3-x_1)}{g(x_3-x_1)^2g(-(x_3-s))}\bigr)\to\\
    \to\frac{1}{g(x_1)^2}\bigl(\frac{g'(x_1)g(-x_1+x)-g'(-x_1+x)g(x_1)}{g(-x_1+x)^2g(-x)}\bigr)'|_{x=x_3}=\\
    =\frac{1}{g(x_1)^2g(x_3-x_1)^4g(x_3)^2}\Bigl(-g(x_3)g(x_3-x_1)^2\bigl(g'(x_1)g'(x_3-x_1)-g''(x_3-x_1)g(x_1)\bigr)+\\
    +\bigl(g'(x_3)g(x_3-x_1)^2+2g(x_3-x_1)g'(x_3-x_1)g(x_3)\bigr)\bigl(g'(x_1)g(x_3-x_1)-g'(x_3-x_1)g(x_1)\bigr)\Bigr)=\\
    =\frac{1}{g(x_1)^2g(x_3-x_1)^4g(x_3)^2}\Bigl(g(x_1)g(x_3)g(x_3-x_1)^2g''(x_3-x_1)-\\
    -g(x_3)g(x_3-x_1)^2g'(x_1)g'(x_3-x_1)+g'(x_1)g'(x_3)g(x_3-x_1)^3+2g'(x_1)g'(x_3-x_1)g(x_3-x_1)^2g(x_3)-\\
    -g(x_1)g'(x_3)g'(x_3-x_1)g(x_3-x_1)^2-2g(x_1)g(x_3)g(x_3-x_1)(g'(x_3-x_1)^2)\Bigr)=\\
    =\frac{g'(x_1)g'(x_3-x_1)}{g(x_1)^2g(x_3)g(x_3-x_1)^2} +\frac{g''(x_3-x_1)}{g(x_1)g(x_3)g(x_3-x_1)^2}-\frac{2g'(x_3-x_1)^2}{g(x_1)g(x_3)g(x_3-x_1)^3}+\\
    +\frac{g'(x_1)}{g(x_1)^2g(x_3)^2g(x_3-x_1)}-\frac{g'(x_3-x_1)g'(x_3)}{g(x_+)g(x_3-x_1)^2g(x_3)^2}\bigr)
    \end{multline*}

\vspace{3ex}

    Adding all summands together we obtain that the rigidity equation under the restrictions $x_1+x_2=0$, $x_3+x_4=0$, $x_5=0$ has the following form on the variables $x_1=-x_2=x$ and $x_3=-x_4=y$:

    \begin{multline*}
        c=\frac{2g'(x)}{g(y)^2g(x)^3}-2\frac{g'(x)}{g(x)^3g(y-x)^2}+2\frac{g'(y-x)}{g(x)^2g(y-x)^3}+\frac{g'(x)g'(y-x)}{g(x)^2g(y)g(y-x)^2}+\\
        +\frac{g''(y-x)}{g(y)g(x)g(y-x)^2}-\frac{2g'(y-x)^2}{g(x)g(y)g(y-x)^3}+\frac{g'(x)g'(y)}{g(x)^2g(y)^2g(y-x)}-\frac{g'(y-x)g'(y)}{g(y-x)^2g(x)g(y)^2}
    \end{multline*}

    \bigskip

    Setting $x=y+z$ we get

    \begin{multline*}
        c=\frac{2g'(y+z)}{g(y)^2g(y+z)^3}-\frac{2g'(y+z)}{g(y+z)^3g(z)^2}-\frac{2g'(z)}{g(y+z)^2g(z)^3}+\frac{g'(y+z)g'(z)}{g(y+z)^2g(y)g(z)^2}-\\
        -\frac{g''(z)}{g(y)g(y+z)g(z)^2}+\frac{2g'(z)^2}{g(y+z)g(y)g(z)^3}-\frac{g'(y+z)g'(y)}{g(y+z)^2g(y)^2g(z)}-\frac{g'(z)g'(y)}{g(z)^2g(y+z)g(y)^2}
    \end{multline*}

\bigskip

    After symmetrisation we obtain

    \begin{multline*}
        2c=-\frac{2g'(z)}{g(y+z)^2g(z)^3}-\frac{2g'(y)}{g(y+z)^2g(y)^3}-\frac{g''(z)}{g(y)g(y+z)g(z)^2}-\frac{g''(y)}{g(z)g(y+z)g(y)^2}+\\
        +\frac{2g'(z)^2}{g(y+z)g(y)g(z)^3}+\frac{2g'(y)^2}{g(y+z)g(z)g(y)^3}-\frac{2g'(z)g'(y)}{g(z)^2g(y+z)g(y)^2}
    \end{multline*}

 \begin{multline*}
        2cg(y+z)^2g(y)^3g(z)^3+2g'(z)g(y)^3+2g'(y)g(z)^3=g(y+z)\Bigl(-\bigl(g''(z)g(y)^2g(z)+\\
        +g''(y)g(z)^2g(y)\bigr)+2\bigl(g'(z)^2g(y)^2+g'(y)^2g(z)^2-g'(y)g'(z)g(y)g(z)\bigr)\Bigr)
    \end{multline*}

\bigskip

Expand our power series on $z$ up to $z^3$:

\medskip

    $g(y+z)=g(y)+g'(y)z+g''(y)\frac{z^2}{2}+g'''(y)\frac{z^3}{6}+\ldots$

\medskip

    $g(z)=z+\frac{a}{3}z^3+\ldots$

    \medskip

    $g'(z)=1+az^2+\ldots$

    \medskip

    $g''(z)=2az+bz^3+\ldots$

    \medskip

    $g'(z)^2=1+2az^2+\ldots$

    \medskip

    $g(z)^2=z^2+\ldots$

    \medskip

    $g^3(z)=z^3+\ldots$

    \bigskip

    Below for the sake of brevity we will denote $g(y)$ by $g$ and similarly for its derivatives.

\smallskip

    Then our restricted rigidity equation has the following form up to $z^3$:

    \begin{multline*}
        2cg^5z^3+2(1+az^2)g^3+2g'z^3=(g+g'z+g''\frac{z^2}{2}+g'''\frac{z^3}{6})\Bigl(-\bigl((2az+bz^3)(z+\frac{a}{3}z^3)g^2+\\
        +g''gz^2\bigr)+2\bigl((1+2az^2)g^2+(g')^2z^2-g'g(1+az^2)(z+\frac{a}{3}z^3)\bigr)\Bigr)=(g+g'z+g''\frac{z^2}{2}+g'''\frac{z^3}{6})\times\\
        \times\Bigl(-\bigl(2az^2g^2+g''gz^2\bigr) +2\bigl((1+2az^2)g^2+(g')^2z^2-g'g(z+\frac{4a}{3}z^3)\bigr)\Bigr)=\\
        =(g+g'z+g''\frac{z^2}{2}+g'''\frac{z^3}{6})(2g^2-2gg'z+z^2(-2ag^2-g''g+4ag^2+2(g')^2)-g'g\frac{8a}{3}z^3)
    \end{multline*}

\smallskip
    Then the coefficient on the $z^3$ term is

    $$2cg^5+2g'=-\frac{8a}{3}g'g^2+2ag^2g'-gg'g''+2(g')^3-gg'g''+\frac{g^2g'''}{3}$$

\medskip
    So we obtain the differential equation

    $$2cg^5+2g'=-2g''g'g-\frac{2a}{3}g^2g'+2(g')^3+\frac{g'''g^2}{3}$$

    Since all terms except for $2cg^5$ are even we immediately obtain that $c=0$ (as predicted by Theorem \ref{zero}).

    $$g'(6+6gg''+2ag^2-6(g')^2)=g'''g^2$$

\bigskip

    As $(g')^2$ is even and $g(y)=y+\ldots$, we can set $(g')^2=1+\sum_{k\ge 1}a_k g^{2k}$ for some $a_k$. Note that $a_1=2a$, since $(g')^2=1+2ay^2+\ldots$
    
Differentiating we get $g'g''=2agg'+\sum_{k\ge 2}ka_kg'g^{2k-1}$. As the power series $g'$ is invertible, we obtain $g''=2ag+\sum_{k\ge 2}ka_k g^{2k-1}$.

Differentiating again we get $g'''=2ag'+\sum_{k\ge 2}k(2k-1)a_k g^{2k-2}g'$.

\smallskip

Substituting in the differential equation we obtain

$$g'(6+12ag^2+6\sum_{k\ge2}ka_kg^{2k}+2ag^2-6-12ag^2-6\sum_{k\ge 2}a_kg^{2k})=g'(2ag^2+\sum_{k\ge 2}k(2k-1)a_k g^{2k})$$

$$6\sum_{k\ge 2}(k-1)a_kg^{2k}=\sum_{k\ge 2}k(2k-1)a_kg^{2k}$$

Comparing coefficients on $g^{2k}$ we get $a_{k}=0$ for $k\ge 3$. Then we obtain the equation $(g')^2=1+2ag^2+a_2g^4$. Therefore $g$ is the elliptic sine and $f(x)=e^{\alpha x} \mathrm{sn}(x)$.
\end{proof}

\end{document}